\newtheorem{theorem}{Theorem}[section]
\newtheorem*{theorem*}{Theorem}
\newtheorem{lemma}[theorem]{Lemma}
\newtheorem{assumption}[theorem]{Assumption}
\newtheorem*{corollary*}{Corollary}
\newtheorem*{conjecture*}{Conjecture}
\theoremstyle{remark}
\theoremstyle{definition}
\newtheorem{example}[theorem]{Example}
\numberwithin{figure}{section}
\newcounter{fig}
\newcommand{\ddt}{\frac{d}{dt}}
\newcommand{\lskip}{\phantom{.}}
\newcommand{\var}{\hbox{Var}}
\def\R{\mathbb R}
\def\E{\mathbb E}
\newcommand{\Var}{\hbox{Var}}
\title{Propagation of Fluctuations in Biochemical Systems, II:
  Nonlinear Chains } \author{David F. Anderson$^{1}$ and Jonathan
  C. Mattingly,$^{2}$}
\begin{document}

\maketitle

\footnotetext[1]{Department of Mathematics, University of
  Wisconsin-Madison, Madison, Wi 53706, anderson@math.wisc.edu}
\footnotetext[2]{Department of Mathematics, Duke University, Durham,
  NC 27708}

\begin{abstract}
  We consider biochemical reaction chains and investigate how random
  external fluctuations, as characterized by variance and coefficient
  of variation, propagate down the chains.  We perform such a study
  under the assumption that the number of molecules is high enough so
  that the behavior of the concentrations of the system is well
  approximated by differential equations.  We conclude that the
  variances and coefficients of variation of the fluxes will decrease
  as one moves down the chain and, through an example, show that there
  is no corresponding result for the variances of the chemical
  species.  We also prove that the fluctuations of the fluxes as
  characterized by their time averages decrease down reaction chains.
  The results presented give insight into how biochemical reaction
  systems are buffered against external perturbations solely by their
  underlying graphical structure and point out the benefits of
  studying the out-of-equilibrium dynamics of systems.
\end{abstract}

\section{Introduction}

In \cite{AndThesis} and \cite{AndMattReed06} we began a study of
biochemical reaction systems subjected to random, external forcing.
The question we considered, and continue with here, is the following:
if we add random, external forcing to the input of a biochemical
reaction system, how do those fluctuations (characterized by their
variance and coefficient of variation) propagate through the entire
system?  The broader aims of this paper are to gain a better
understanding of how the network topology of biochemical reaction
systems suppresses or otherwise alters the behavior of fluctuations in
reaction systems and to point out the benefits of studying the
out-of-equilibrium dynamics of systems.

In \cite{AndMattReed06} we studied systems under the two simplifying
assumptions that the kinetics were all mass action and that each
reaction involved turning precisely one substrate into another
substrate.  Therefore, in the terminology of Horn, Jackson, and
Feinberg, each complex consisted of a single species
(\cite{FeinbergLec79}\cite{HornJack72}).  Thus, we allowed reactions
of the form $A \rightarrow B$, but not $A+B \rightarrow C$.  These two
assumptions caused the differential equations governing the
concentrations of the species to be linear and so we referred to them
as linear SSC (single species complex) systems.  Considering linear
SSC systems decreased some of the technical difficulties of the
analysis while still allowing us to probe how different network
structures affect the propagation of fluctuations.  Under these
assumptions we proved that the variances of fluxes decrease down
reaction chains and that side reaction systems and feedback loops
lower the variance of the flux out of reaction chains.  A natural
question is whether or not these results from \cite{AndMattReed06}
hold when we drop one or both of the simplifying assumptions.  The
main purpose of this paper is to demonstrate a biologically
significant result from \cite{AndMattReed06} that does hold when we
drop both the SSC and mass action assumptions: the variances and
coefficients of variation of fluxes decrease as one moves down a
non-reversible reaction chain.

For an example of a reaction chain consider the following biochemical
system: a substrate, $S$, enters the system at a constant rate, $I >
0$.  This substrate then combines with an enzyme, $E$, to form $ES$,
which is then degraded to some product substrate $P$ plus the original
enzyme.  Finally, the product $P$ leaves the system.  If the
concentration of the enzyme is taken to be so large as to be assumed
constant and the reactions are non-reversible then the following graph
faithfully models our system:
\begin{equation}
  \begin{array}{ccccccc}
    I  & & F_1 & & F_2 &  & F_{3}\\
    \longrightarrow & S & \longrightarrow & ES & \longrightarrow & P &
    \longrightarrow,
  \end{array}
  \label{reaction_example}
\end{equation}
where $F_1$, $F_2$ and $F_3$ are functions that give the rates of the
respective reactions.  Let $s,es,$ and $p$ be the concentrations of
$S$,$ES$, and $P$, respectively.  Then, if the kinetic functions
$F_1,$ $F_2,$ and $F_3$ are functions of the reactant substrates only,
the differential equations governing the temporal evolution of the
concentrations are
\begin{align}
  \begin{split}
    \dot s(t) &= I - F_1(s(t))\\
    \dot {es}(t) &= F_1(s(t)) - F_2(es(t))\\
    \dot p(t) &= F_2(es(t)) - F_3(p(t)).
  \end{split}
  \label{ex:model}
\end{align}
If the functions $F_i$ are differentiable, monotone increasing, and
satisfy $F_i(0) < I < \lim_{x \to \infty} F_i(x)$, then it is easily
seen that, independent of initial conditions, the system
\eqref{ex:model} will converge to the steady state $(\bar s, \bar
{es}, \bar p) = (F_1^{-1}(I), F_2^{-1}(I), F_3^{-1}(I))$.  However, if
the input to the system \eqref{reaction_example} is allowed to
fluctuate in time, then each concentration will fluctuate, and, hence,
each flux, $F_{i}$, will also fluctuate.  If the fluctuations are
random, we can ask what the variance or coefficient of variation of
each flux is with respect to that randomness, and how they relate.  It
is the goal of this paper to prove
\begin{align}
  \Var(F_1(s(t))) &>  \Var(F_2(es(t))) > \Var(F_3(p(t)))  \label{align:ineq1}\\
  CV(F_1(s(t))) &> CV(F_2(es(t))) > CV(F_3(p(t))),
  \label{align:ineq2}
\end{align}
where Var$(\cdot)$ and $CV(\cdot)$ represents variance and coefficient
of variation, respectively. (Notice that the inequalities are strict.)

The reaction chain given in \eqref{reaction_example} is an example of
an SSC chain because each node of the network graph consists of a
single substrate.  In general, a reaction chain is any biochemical
system of the following form:
\begin{equation}
  \begin{array}{ccccccccccc}
    I  & & F_1 & & F_2 &  & F_{n-1} & & F_n \\
    \longrightarrow & C_1 & \longrightarrow & C_2 & \longrightarrow &
    \dots & \longrightarrow & C_n & \longrightarrow,
  \end{array}
  \label{reactionchain}
\end{equation}
where $I \in \R_{> 0}$ is the constant input to the system, the
complexes, $C_i$, are linear combinations of the substrates, and $F_i:
\R^{m_i}_{\ge 0} \to \R_{\ge 0}$ are the reaction kinetics (where
$m_i$ is the number of distinct substrates composing complex $C_i$).
In \cite{AndMattReed06} we showed that if the constant input $I$ is
replaced by the fluctuating in time random process $I +
\xi(t,\omega)$, where $\xi(t,\omega)$ is either white noise or a mean
zero, finite variance, stationary stochastic process such that
$\xi(t,\omega) \ge -I$, and if the system \eqref{reactionchain} is a
linear SSC chain, then for all $i \ge 1$, $\var(F_{i}) >
\var(F_{i+1})$, where the variance is computed according to the unique
stationary measure to which the distribution of the species converges.
In this paper, we prove that this result still holds when we drop the
assumption that the kinetics are mass action and the assumption that
each complex consists of a single species.  The main assumption on the
kinetics will be that they are monotone increasing in each of their
dependent variables (so, for example, we may consider Michaelis-Menten
kinetics).  In dropping the SSC assumption we will show that the
result still holds when the complexes are composed of multiple species
so long as each species appears in precisely one complex.  Throughout,
we will refer to systems for which complexes can by composed of
multiple species yet each species appears in a single complex as MSC
(multiple species complexes) systems.

The goal of this paper is to prove that variances of fluxes caused by
an external stochastic input decrease down a non-reversible reaction
chain. We will show that if $I$ is the average input to a reaction
chain, then (once the system has reached its statistical equilibrium)
the mean of each flux is also equal to $I$.  Therefore, saying that
the variances of the fluxes decrease down a reaction chain is
equivalent to saying that the coefficients of variation of the fluxes
decrease down a reaction chain.  That is, equation \eqref{align:ineq1}
is equivalent to equation \eqref{align:ineq2}.  Because of this
equivalence between the magnitudes of variances and the magnitudes of
coefficients of variation, each result in this paper is stated in
terms of variance alone and it is understood that each result is still
valid if $\Var(\cdot)$ is replaced with $CV(\cdot)$.

Throughout, we allow external perturbations to be white noise
processes or mean zero, finite variance, stationary stochastic
processes.  Considering white noise processes is useful because it
allows one to make use of the It\^o Calculus with which stronger
results (i.e. fewer restrictions on the rate functions, $F_i$) can be
proven than if you solely consider arbitrary perturbations.  Also, if
the input flux to a reaction system is perturbed by white noise, then
all other fluxes are perturbed by mean zero, finite variance,
stationary continuous processes.  Therefore, one may construct
continuous, stationary perturbations from white noise processes by
allowing a pseudo-species to be perturbed by white noise and
considering the output from the pseudo-species as the input to the
reaction system of interest.  In doing so, one is allowed to use the
stronger white noise results as opposed to the stationary noise
results.  Thus, allowing both types of perturbations is quite natural.
However, we point out that we do not feel the choice of external
forcing is critical because the broader aim of this paper is to study
the out-of-equilibrium dynamics of biochemical systems and both
choices of perturbation achieve this aim.

The layout of the paper is as follows.  In Section \ref{sec:SSCchains}
we consider SSC chains.  In Section \ref{sec:MSCchains} we consider
MSC chains.  Complementing the main results are two important
examples.  In Section \ref{sec:SSCchains}, Example \ref{example1} is a
nonlinear chain perturbed by white noise for which the variance (and
CV) of the species increase down the chain.  Hence, there is no
corresponding ``decreasing fluctuation'' result for the species of
reaction chains.  In Section \ref{sec:MSCchains}, Example
\ref{example2} demonstrates that the assumption that each species is
in precisely one complex is a necessary one.  In both examples we use
a Monte Carlo simulation to arrive at our conclusions.  The proofs of
all the results in this paper are found in Appendix \ref{appA}.

This paper is part of a larger research project in which the main
biological goal is to understand how network topology affects how
biochemical systems react to large-scale, random perturbations to
their inputs.  There are two distinct approaches we take in trying to
achieve this goal.  In the first, we apply random fluctuations to {\it
  in silico} representations of specific biological systems.  We can
then identify reactions, substrates, or whole subsystems that are
buffered against the fluctuations, i.e. are homeostatic.  We can then
take the system apart piece by piece through {\it in silico}
experimentation to discover the regulatory mechanisms that give rise
to the homeostasis.  In the second, we prove theorems about how random
fluctuations propagate through relatively simple, but biologically
relevant, systems.  We are interested in how these systems magnify or
suppress fluctuations as this may give clues as to why these systems
are structured as they are.  In this second approach it is the
out-of-equilibrium dynamics that is being probed in order to give
information on the emergent properties of the system.  This paper,
like \cite{AndThesis} and \cite{AndMattReed06}, takes the second
approach; for an example of the first, see \cite{nijhout06}.

Due to the inherent randomness in the making and breaking of chemical
bonds, biochemical reaction systems are, at their most fundamental
level, modeled as jump Markov processes
(\cite{Delbruck40}\cite{Gill76}\cite{OthGadLee05}\cite{ThatOud01}\cite{GansKrieger1960}).
However, if one scales up the volume and number of molecules in a
system while keeping the initial concentrations constant, then this
intrinsic randomness becomes negligible at the scale of
concentrations.  One is then able to faithfully model the
concentrations of the substrates by a system of differential equations
(\cite{Kurtz72}).  As in \cite{AndThesis} and \cite{AndMattReed06} we
consider systems in this scaling limit.  Thus, the random external
forcing in this paper is on the scale of concentrations (and not of
individual molecules) and the concentrations of the species are
modeled by differential equations and not by discrete jump processes.
For a more detailed comparison between the randomness in this paper
and the inherent randomness of biochemical systems, see
\cite{AndMattReed06}.

\section{SSC chains with random perturbations}
\label{sec:SSCchains} 

In this section we consider nonlinear SSC chains subjected to random
perturbations.

\subsection{The model}
\label{model}

A non-reversible SSC chain with a constant input is a biochemical
reaction system with the following graphical structure:
\begin{equation}
  \begin{array}{ccccccccccc}
    I  & & F_1 & & F_2 &  & F_{n-1} & & F_n \\
    \longrightarrow & X_1 & \longrightarrow & X_2 & \longrightarrow &
    \dots & \longrightarrow & X_n & \longrightarrow,
  \end{array}
  \label{sscchain}
\end{equation}
where $I > 0$ is the constant input to the system, $X_i$ are the
species (and complexes) of the system, and $F_i: \R_{\ge 0} \to
\R_{\ge 0}$ are the reaction kinetics.  If we let $\{x_i\}$ denote the
concentrations of the species $\{X_i\}$, then the temporal evolution
of $x(t)$ is governed by the following differential equation:
\begin{align}
  \begin{split}
    \dot x_1 &= I-F_1(x_1)\\
    \dot x_2 &= F_1(x_1) - F_2(x_2)\\
    &\ \ \vdots \\
    \dot x_n &= F_{n-1}(x_{n-1}) - F_n(x_n).
  \end{split}
  \label{ssc}
\end{align}

In the sequel we make the following standing assumptions on the
functions $F_i$.

\begin{assumption}Each $F_i$ is a real valued $C^1$ function of
  $[0,\infty)$ with the following properties:
  \begin{enumerate}[a)]
  \item $F_i(0) = 0$.
  \item For all $x \in \mathbb{R}_{> 0}$, $F_i'(x) > 0$ .
  \item $\displaystyle{\lim_{x \rightarrow \infty}}F_i(x) > I$.
    \label{nobuildup}
  \end{enumerate}
  \label{ssc_assump}
\end{assumption}
Note that condition \ref{nobuildup}) guarantees that mass will not
build up at any point along the chain so long as the input is kept at
the constant value $I$.  This assumption is also reasonable for
systems for which the input is being perturbed by a mean zero random
process and will be used to keep concentrations from escaping to
infinity.

We will consider two different classes of random perturbations of the
input $I$. The first will be white in time while the second will be
almost surely continuous in time.  Since the kinetics, $F_i$, are
defined only on the positive portion of the real line, it is important
that any noise we consider as a perturbation to the input will never
drive the concentrations of the species into the negative portion of
the real line. Hence we will impose restrictions on the perturbations
to ensure that the specie concentrations stay non-negative at all
times.  Because we consider two different classes of perturbations of
the input, we consider two different mechanisms to achieve this goal.

In the case of the white in time random perturbation, we multiply the
noise term, $dB_t$, by a function, $\theta_{\delta}(\cdot)$, that
turns the noise off if $x_1$ approaches zero.  This property of
$\theta_{\delta}$ combined with the dynamics governing the
concentration of $X_1$, ensures that $x_1$ remains non-negative for
all time which, in turn, keeps all other concentrations non-negative
for all time.  The concentration of $x_1$ is now governed by a
stochastic differential equation, while the equations for the
remaining $x_i$ stay as in \eqref{ssc}.  That is,
\begin{align}
 \begin{split}
     dx_1 &= (I-F_1(x_1))dt + \sigma \theta_{\delta}(x_1) dB(t)\\
     \dot x_2 &= F_1(x_1) - F_2(x_2)\\
    &\ \ \vdots \\
    \dot x_n &= F_{n-1}(x_{n-1}) - F_n(x_n),
  \end{split}
     \label{sscsdes}
\end{align}
where $B(t) = B(t,\omega)$ is standard one dimensional Brownian
motion, $\sigma \in \R_{>0}$, and for some small $\delta > 0$,
$\theta_{\delta}(x)=1$ for all $x > \delta$, $\theta_{\delta}(0) = 0$
and $\theta_{\delta}(x)$ is $C^{\infty}$ and monotone increasing.
Since $F_1(0) = \theta_{\delta}(0) = 0$ and $I > 0$, $x_1(t) \ge 0$
for all $t > 0$ if $x_1(0) > 0$.

The second class of perturbations to the input considered in this
paper are mean zero, finite variance, stationary, random processes,
$\xi(t,\omega)$, that are continuous in time at almost every moment of
time.  To guarantee that the concentrations of the chemical substrates
remain non-negative for all time we only consider perturbations such
that $\xi(t,\omega) \ge -I$ for all $t$ and $\omega$ (and so we no
longer need the function $\theta_{\delta}(\cdot)$ used in the white in
time setting).  In order to keep the reaction system away from
equilibrium, we make the added restriction that for each choice of
$\omega$, $\xi(t,\omega)$ is non-constant on all time intervals larger
than some fixed value $a = a(\omega)$.  We will typically write
$\xi(t)$ instead of $\xi(t,\omega)$. The almost everywhere continuity
of $\xi(t)$ allows the possibility of isolated jumps and allows us to
use a standard differential equation for $x_1$ (in contrast to the
It\^o stochastic differential equation used in \eqref{sscsdes}).  In
this case, the equations governing the behavior of the concentrations
are
\begin{align}
 \begin{split}
   \dot x_1 &= I-F_1(x_1) + \xi(t) \\
   \dot x_2 &= F_1(x_1) - F_2(x_2)\\
   &\ \ \vdots \\
   \dot x_n &= F_{n-1}(x_{n-1}) - F_n(x_n).
  \end{split}
     \label{sscstation}
\end{align}
To prove the existence of a stationary state we will further assume
that the distribution of the noise's future is completely determined
by its past.  An example of such a $\xi(t)$ is a Markov processes
whose future distribution depends only on its present value.

\subsection{Decreasing variance for SSC chains}
\label{results}
%%%%
We are mainly interested in describing the system once it has settled
into a statistical equilibrium and any behavior that is transient in
time has passed. Such statistical steady states are characterized by a
stationary solution. A solution $x^*(t)$ is stationary if for any
collection of times $t_1 < t_2 < \cdots < t_n$ and any $s$ the
distribution of the vector $(x^*(t_1+s), x^*(t_2+s),\cdots,
x^*(t_n+s))$ is independent of $s$. When the forcing is Brownian as in
\eqref{sscsdes}, the solution is a Markov process and the distribution
of $x^*(t)$ at any time $t$ is an invariant measure for the associated
Markov semigroup. An invariant measure, $\mu$, is a measure on the
state space of the system, $\R^n$, such that if the initial condition
is chosen according to $\mu$ then solutions at any time $t\geq 0$ are
also distributed as $\mu$. More precisely, if for all measurable $A
\subset \R^n$, $P( x(0) \in A) = \mu(A)$ implies that $P( x(t) \in
A)=\mu(A)$ for all $t \geq 0$, then $\mu$ is invariant to the dynamics
of the system.  Therefore, when the forcing is Brownian, a stationary
solution exists.  When the forcing is a stationary process $\xi(t)$ as
in \eqref{sscstation} more care must be taken to obtain a stationary
solution to the dynamics as the solution need not be a Markov process.

We will concern ourselves with the existence and basic properties of
stationary solutions and invariant measures at the end of this
section. First we state the principal result of the article and give a
few numerical examples to illustrate its use.  The following theorem
is proved in Appendix \ref{appA_principal}

\begin{theorem}[Decreasing variance down a nonlinear SSC chain]
  Let $x^*(t)$ be a stationary solution for the dynamics given in
  either equation \eqref{sscsdes} or \eqref{sscstation}. Then for all
  $1 \le i \le n$ and all $t$
  \begin{equation*}
    \var\left(F_{i}(x_{i}^*(t)) \right) > \var\left(F_{i+1}(x_{i+1}^*(t))
    \right)\;.
  \end{equation*}
  \label{nonlineardecrease}
\end{theorem}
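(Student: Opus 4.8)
The plan is to work throughout with the stationary fluxes $y_i(t) := F_i(x_i^*(t))$ and to reduce the whole theorem to one algebraic identity relating consecutive fluxes, extracted from the stationarity of a carefully chosen observable. First I would record that in statistical equilibrium every flux has mean $I$: because $x^*$ is stationary, $t \mapsto \E[x_i^*(t)]$ is constant, and its vanishing time-derivative gives $\E[y_{i-1}] = \E[y_i]$ for $i \ge 2$ (the equations for $x_i$, $i\ge 2$, are genuine ODEs) together with $\E[y_1] = I$ (in \eqref{sscsdes} the stochastic integral is a mean-zero martingale, and in \eqref{sscstation} $\xi$ has mean zero). Hence $\E[y_i] = I$ for every $i$. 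I assume the stationary law has finite second moments for the fluxes, an integrability point to be confirmed separately from Assumption \ref{ssc_assump}.

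The crux is the identity
\[
 \mathrm{Cov}(y_i, y_{i+1}) = \var(y_{i+1}),
\]
valid for each $i$, where I use that $x_{i+1}$ obeys the noiseless ODE $\dot x_{i+1} = F_i(x_i) - F_{i+1}(x_{i+1}) = y_i - y_{i+1}$. To derive it, take the test function $H(x) := \int_0^x \big(F_{i+1}(u) - I\big)\,du$, so that $H'(x) = F_{i+1}(x) - I$. Integrating the pathwise relation $\frac{d}{dt}H(x_{i+1}(t)) = H'(x_{i+1})(y_i - y_{i+1})$ over $[0,\tau]$ and taking expectations, stationarity makes the left side vanish (equal distributions at times $0$ and $\tau$) and makes the integrand's mean time-independent, so $\E\big[(y_{i+1}-I)(y_i - y_{i+1})\big] = 0$. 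Expanding with $\E[y_i] = \E[y_{i+1}] = I$ yields exactly $\mathrm{Cov}(y_i, y_{i+1}) = \var(y_{i+1})$. No It\^o correction ever enters, since for $i+1 \ge 2$ the equation for $x_{i+1}$ is smooth even in the white-noise model, where the noise acts only on $x_1$; the sole analytic point is a Fubini/integrability justification of the interchange of expectation and time-integration.

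Granting the identity, Cauchy--Schwarz gives $\var(y_{i+1}) = \mathrm{Cov}(y_i, y_{i+1}) \le \sqrt{\var(y_i)\,\var(y_{i+1})}$, hence $\var(y_{i+1}) \le \var(y_i)$. For strictness I would note that equality in Cauchy--Schwarz forces $y_i - I = \lambda(y_{i+1}-I)$ a.s., and the identity then pins $\lambda = 1$, so $y_i \equiv y_{i+1}$; but then $\dot x_{i+1} \equiv 0$, making $x_{i+1}$ and hence $y_{i+1}$ constant, i.e. $\var(y_{i+1}) = 0$. Thus the inequality is strict whenever $\var(y_{i+1}) > 0$. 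Positivity of all the variances then follows by propagation down the chain, since the same computation shows that $\var(y_{i+1}) = 0$ forces $\var(y_i) = 0$; so it suffices that the input flux fluctuate, $\var(y_1) > 0$. This is where the non-degeneracy of the forcing enters: in \eqref{sscsdes}, $x_1$ cannot be a.s. constant because $\theta_\delta(x_1) > 0$ off the origin while $\dot x_1 = I > 0$ at the origin; in \eqref{sscstation}, $x_1$ constant would force $\xi(t) = F_1(x_1) - I$ to be constant in time, contradicting the assumption that $\xi$ is non-constant on long intervals.

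The step I expect to be the main obstacle is this passage to the strict inequality: the identity together with Cauchy--Schwarz delivers the non-strict bound almost immediately, but ruling out equality is precisely where the non-degeneracy hypotheses on the forcing must be invoked and carefully propagated along the chain. A secondary hurdle is the integrability bookkeeping -- controlling the tails of the stationary law well enough to define the second moments and to justify integrating $H(x_{i+1})$ against the stationary measure.
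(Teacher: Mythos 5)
Your proposal is correct and follows essentially the same route as the paper's proof: the same mean computation, the same test function (your $H$ is the paper's $G_1/2$), the same stationarity identity $\var(y_{i+1}) = \mathrm{Cov}(y_i,y_{i+1})$, and the same endgame in which equality would force $F_i(x_i^*) \equiv F_{i+1}(x_{i+1}^*)$, freezing the chain and contradicting the non-degeneracy of the forcing. The only cosmetic difference is that you pass from the identity to the inequality via Cauchy--Schwarz together with a propagation of variance positivity down the chain, whereas the paper applies $2ab \le a^2+b^2$ pointwise, which yields $y_i = y_{i+1}$ directly without the case split.
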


We note that the variances of Theorem \ref{nonlineardecrease} are
computed with respect to the choice of randomness, $\omega$, in
$B(t,\omega)$ or $\xi(t,\omega)$.  That is, Theorem
\ref{nonlineardecrease} gives the variance as an average over the
choice of perturbations.  In Subsection \ref{sec:paths} we give a
similar result except the variance is computed as a time average over
a single path.  In many natural settings (including those given in the
examples below), the two notions are equivalent and, therefore, give
the same intuition about fluctuations down reaction chains.

%%%%%%%%%%%%
We now give three examples where the preceding theorem holds. For the
moment we will assume that the systems possess a stationary solution
to which the statistics of the solutions converge as $t \rightarrow
\infty$.  At the end of the section we will prove that the preceding
assumption holds for any initial condition.

\begin{example}[Species variances need not decrease]
  Consider the following SSC chain with Michaelis-Menten kinetics
  \begin{equation*}
    \begin{array}{cccccc}
      10 + \sigma \theta_{\delta}(x_1) dB_t & & F_1(x_1) & & F_2(x_2) & \\
      \longrightarrow & X_1 & \longrightarrow & X_2 & \longrightarrow &,
    \end{array}
  \end{equation*}
  where $\sigma = 1$, $\delta = .001$, $F_1(x_1) = x_1$, $F_2(x_2) =
  \frac{12x_2}{1 + x_2}$.  We will see in Theorem \ref{measurelemma}
  that this system possesses a unique invariant measure to which the
  statistics of the trajectories converge.  Using Matlab to perform a
  Monte Carlo simulation we computed the means, variances, and
  coefficients of variation of the species and fluxes to be the
  following:
  \begin{center}
    \begin{tabular}{|c|c|c|c|c|}
      \hline
      & $x_1$ & $x_2$ & $F_1(x_1)$ & $F_2(x_2)$  \\
      \hline
      mean & 10 & 5.18 & 10 & 10 \\
      \hline
      variance & .5 & 1.19 & .5 & .124 \\
      \hline
      CV & .07 & .21 & .109 & .035\\
      \hline
    \end{tabular}
  \end{center}
  As guaranteed by Theorem \ref{nonlineardecrease}, the fluctuations
  of the fluxes decrease down the chain.  However, no matter the
  measure we use (variance or CV), the fluctuations of $x_{2}$ are
  always greater than those of $x_1$.  Therefore, there is no
  counterpart to Theorem \ref{nonlineardecrease} pertaining to the
  species of an SSC reaction chain.

  To understand why the fluctuations of $x_2$ are higher than those of
  $x_1$, consider the plot of $F_2(x) = 12x/(1+x)$ in Figure
  \ref{M-M}.  The horizontal lines at $x_1 = 9,10,$ and $11$ represent
  possible fluxes into species $X_2$, while the vertical lines show
  what the equilibrium value of $x_2$ would be corresponding to that
  input.  While the perturbed system will never settle to an
  equilibrium, the kinetics will always be driving the concentration
  of $X_2$ towards the solution of $\frac{12x_2(t)}{1+x_2(t)} =
  x_1(t)$.  Therefore, minor fluctuations in the input to the species
  $X_2$ give rise to large fluctuations in $x_2$.

  \begin{figure}
    \begin{center}
      \includegraphics[height=2.2in]{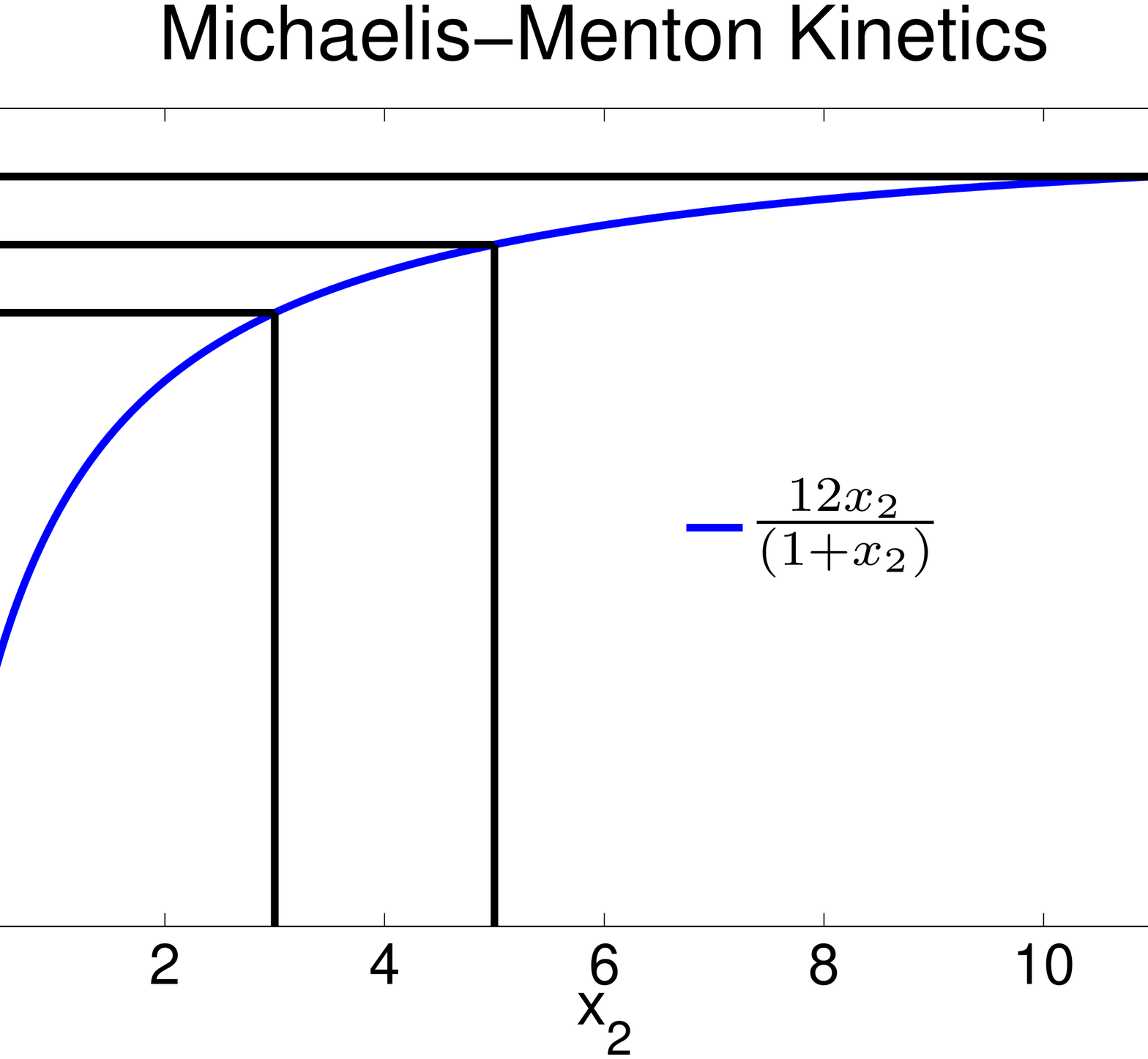}
    \end{center}
    \caption{The horizontal lines represent possible inputs to species
      $X_2$ from species $X_1$ and the vertical lines represent the
      value of $x_2$ that would give an equilibrium to the system for
      a given input.  We therefore see that minor fluctuations in
      $F_1(x_1) = x_1$ can correspond with large fluctuations in
      $x_2$.}
    \label{M-M}
  \end{figure}
\label{example1}
\end{example}

\begin{example}[Continuous input and unbounded kinetics]
  Consider the following SSC chain
  \begin{equation*}
    \begin{array}{cccccc}
      10 + \xi(t) & & F_1(x_1) & & F_2(x_2) &  \\
      \longrightarrow & X_1 & \longrightarrow & X_2 & \longrightarrow ,
    \end{array}
  \end{equation*}
  where $-10 \le \xi(t)$ is a modified Ornstein-Uhlenbeck process
  defined in Appendix \ref{appB}, $F_1(x_1) = x_1^2$ and $F_2(x_2) =
  x_2^2/(1+x_2)$.  Because both $F_1(x)$ and $F_2(x)$ are unbounded as
  $x \to \infty$, we will see in Theorem \ref{cor:arb_lineargrowth}
  that the system possesses a unique stationary solution to which the
  statistics of the trajectories converge.  Using Matlab to perform a
  Monte Carlo simulation we computed the means, variances, and
  coefficient of variation of the fluxes to be the following:
  \begin{center}
    \begin{tabular}{|c|c|c|c|}
      \hline
      & $10 + \xi(t)$ & $F_1(x_1)$ & $F_2(x_2)$  \\
      \hline
      mean & 10 & 10 & 10 \\
      \hline
      variance & 8 & 6.8 & 3.9  \\
      \hline
      CV & 0.28 & .26 & .20\\
      \hline
    \end{tabular}
  \end{center}
  The variances and coefficients of variation of the fluxes decrease
  down the chain, as guaranteed by Theorem \ref{nonlineardecrease}.
\label{4-1example}
\end{example}

\begin{example}[Continuous input and Michaelis-Menten kinetics]
  Consider the following SSC chain
  \begin{equation*}
    \begin{array}{cccccc}
      4 + \xi(t) & & F_1(x_1) & & F_2(x_2) & \\
      \longrightarrow & X_1 & \longrightarrow & X_2 & \longrightarrow,
    \end{array}
  \end{equation*}
  where $-4 \le \xi(t) \le 4$ is a modified Ornstein-Uhlenbeck process
  defined in Appendix \ref{appB}, $F_1(x_1) = 11x_1/(1+x_1)$, and
  $F_2(x_2) = 10x_2/(1+x_2)$.  We will see in Theorem
  \ref{cor:arb_bdd} that this system possesses a unique stationary
  solution to which the statistics of the trajectories converge. Using
  Matlab to perform a Monte Carlo simulation we computed the means,
  variances, and coefficient of variation of the fluxes to be the
  following:
  \begin{center}
    \begin{tabular}{|c|c|c|c|}
      \hline
      & $4 + \xi(t)$ & $F_1(x_1)$ & $F_2(x_2)$  \\
      \hline
      mean & 4 & 4 & 4 \\
      \hline
      variance & 4.2 & 3.3 & 2.9  \\
      \hline
      CV & .51 & .46 & .43\\
      \hline
    \end{tabular}
  \end{center}
  The variances and coefficients of variation of the fluxes decrease
  down the chain, as guaranteed by Theorem \ref{nonlineardecrease}.
  \label{4-2example}
\end{example}

%%%
%%%
%%%

We now show the random dynamics given in equation \eqref{sscsdes}
possesses a unique invariant measure. This invariant measure generates
a stationary solution $x^*(t)$ when extended to paths from $-\infty$
to $\infty$. Similarly, under some additional assumptions on $\xi$ and
the $F_i$'s, we show that the dynamics given in equation
\eqref{sscstation} possesses a unique stationary solution $x^*(t)$
(the concept of an invariant measure does not directly make sense for
\eqref{sscstation} since the dynamics are not necessarily Markovian,
see Appendix \ref{assumptionXi}). In addition, in both settings we
show that the statistics of the trajectories converge to those of the
stationary solution $x^*(t)$ as $t \to \infty$. In other words, for
any $x(0)$ and measurable $A \subset \R^n$,
\begin{align}
  P( x(t) \in A) \rightarrow P( x^*(t) \in A)= P( x^*(0) \in A), \text{
    as $t \rightarrow \infty$.}
\end{align}
This means that the long time statistics of the solutions are
independent of the initial condition and the result on the decrease of
fluctuations is applicable on long time intervals.  Of course in the
setting of \eqref{sscsdes}, $x^*(t)$ is distributed as the invariant
measure $\mu$ so $ P( x^*(0) \in A)=\mu(A)$. In the setting of
\eqref{sscstation}, $x^*(t,\xi)$ should be viewed as a function of the
entire past of the noise.

The next three results apply, respectively, in the three preceding
examples to ensure the existence of an unique stationary solution to
whose statistics the statistics of arbitrary trajectories converge in time.
The first result covers the case of white in time forcing while the
second two apply to stationary forcing. The proofs of all three are
contained in Appendix \ref{appA_measures}.

\begin{theorem}[Ergodicity of the SSC chain with white noise]
  Equation \eqref{sscsdes}  possesses a unique invariant measure, $\mu$, on
  $\mathbb R^n$.  Furthermore, the distribution of any solution to
  equation \eqref{sscsdes} converges to $\mu$ as $t \to \infty$.
  \label{measurelemma}
\end{theorem}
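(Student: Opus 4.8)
The dynamics \eqref{sscsdes} have a triangular (cascade) structure: $x_1$ satisfies an \emph{autonomous} scalar SDE, and each later coordinate is driven through the deterministic equation $\dot x_i = F_{i-1}(x_{i-1}) - F_i(x_i)$ by the coordinate above it. The plan is to exploit this structure rather than to treat \eqref{sscsdes} as a generic degenerate diffusion. I would proceed in three steps: (i) prove that the scalar diffusion for $x_1$ is positive recurrent with a unique invariant measure $\mu_1$ to which its law converges; (ii) using the strict monotonicity $F_i' > 0$ from Assumption \ref{ssc_assump}, show that for a fixed driving path the downstream ODEs contract, so the chain has a unique bounded stationary solution and forgets its initial condition; (iii) combine these to construct $\mu$ on $\R^n$ and deduce convergence from every starting point.

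\textbf{Step (i): the base coordinate.} The equation $dx_1 = (I - F_1(x_1))dt + \sigma\theta_\delta(x_1)dB$ is autonomous. By the coercivity condition $\lim_{x\to\infty}F_1(x) > I$ of Assumption \ref{ssc_assump}, the drift $I - F_1(x)$ is negative for all large $x$, while $\theta_\delta \le 1$ keeps the noise bounded; an exponential test function $e^{\lambda x_1}$ with small $\lambda$ then gives a Foster--Lyapunov drift condition, hence non-explosion and positive recurrence on $[0,\infty)$ (the drift $I>0$ at the origin keeps $x_1$ off the boundary). Where $\theta_\delta > 0$ the scalar diffusion is nondegenerate, so one-dimensional diffusion theory yields a unique invariant measure $\mu_1$ with total-variation convergence of the law of $x_1(t)$. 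Extending $\mu_1$ to paths produces a stationary driving process $x_1^*(t)$.

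\textbf{Steps (ii)--(iii): the cascade contracts.} For a fixed input, two solutions $x_i,\tilde x_i$ of $\dot x_i = (\text{input}) - F_i(x_i)$ satisfy $\frac{d}{dt}(x_i - \tilde x_i) = -(F_i(x_i) - F_i(\tilde x_i))$, so by strict monotonicity $|x_i - \tilde x_i|$ is nonincreasing and strictly decreasing whenever the coordinates differ; together with confinement this forces synchronization, $|x_i(t) - \tilde x_i(t)| \to 0$. The same comparison gives a unique bounded stationary solution $x_i^*$ driven by $F_{i-1}(x_{i-1}^*)$, as a pullback limit. Inducting up the chain produces a unique stationary solution $x^*(t) = (x_1^*,\dots,x_n^*)$ whose time-zero law is $\mu$. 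For convergence I would couple $x_1(t)$ to $x_1^*(t)$ so that they coincide after a finite coupling time (possible by the total-variation ergodicity of Step (i)); thereafter the inputs to $x_2$ agree and the contraction drives $x_2\to x_2^*$, then $x_3\to x_3^*$, and so on. Hence the law of $x(t)$ converges to $\mu$ for every initial condition, which in particular forces $\mu$ to be the unique invariant measure.

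\textbf{Main obstacle.} The delicate point is the confinement in Steps (i)--(iii) when the kinetics \emph{saturate}, i.e.\ $L_i := \lim_{x\to\infty}F_i(x) < \infty$ (as in the Michaelis--Menten Example \ref{example1}). Then both drift and noise are bounded and a naive linear or exponential Lyapunov function on the product space fails: the input $F_{i-1}(x_{i-1})$ to a downstream species can transiently exceed its saturated outflow $L_i$, so there is no \emph{pointwise} inward drift. What rescues the argument is that the confining mechanism is the \emph{time-averaged} flux: in equilibrium $\E(F_i(x_i)) = I$, and Assumption \ref{ssc_assump} guarantees $I < L_{i+1}$, so the mean drift of $x_{i+1}$ when large is $I - L_{i+1} < 0$. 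Converting this averaged fact into rigorous non-explosion and tightness is the crux; I expect the cleanest route is to pass to the partial-sum coordinates $S_k = x_1 + \cdots + x_k$, for which $\dot S_k = I - F_k(x_k)$ telescopes, or to estimate the ergodic average $\int_0^T F_{i-1}(x_{i-1})\,dt$ directly. A secondary technicality is the degeneracy of the noise on $\{x_1 \le \delta\}$ created by $\theta_\delta$: one must check that the process leaves this strip quickly (again using $I>0$) so that the coupling of Step (i) survives. As a more robust but less structural alternative to Step (ii), one could verify H\"ormander's bracket condition---the iterated Lie brackets of the drift with $\sigma\theta_\delta\partial_{x_1}$ successively generate $\partial_{x_2},\dots,\partial_{x_n}$ with coefficients proportional to $\theta_\delta F_1'\cdots F_{k-1}'$, nonzero on the interior by the strict positivity of the $F_i'$---to obtain the strong Feller property and conclude uniqueness and convergence from irreducibility via the Doob--Khasminskii theorem.
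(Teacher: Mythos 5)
Your proposal is sound in outline, but it takes a genuinely different route from the paper on the uniqueness/convergence half while rediscovering the paper's device on the existence half. For existence, the paper proves tightness of Krylov--Bogoliubov time averages using the Lyapunov function $V(x)=\sum_i \frac{V_i}{2}\bigl[\sum_{j\le i}(x_j-\bar x_j)\bigr]^2$ with $\mathcal{A}V\le c-k|x|$ (Lemma \ref{fact}) and extracts $\mu$ by Prohorov's theorem; your ``main obstacle'' paragraph arrives at exactly this, since your partial sums $S_k=x_1+\cdots+x_k$, whose drift telescopes to $I-F_k(x_k)$, are precisely the coordinates in which the paper's quadratic form is built, and the recursive choice of the weights $V_i$ is what converts the merely time-averaged inward flux you worry about (for saturating kinetics) into a pointwise drift bound. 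For uniqueness and convergence, however, the paper does not use your cascade coupling: it verifies H\"ormander's bracket condition (powered by $F_i'\neq 0$, the same strict monotonicity behind your contraction) to get hypoellipticity and the strong Feller property, obtains open-set irreducibility from the globally attracting fixed point under small noise, and concludes by standard Meyn--Tweedie-type arguments --- that is, the alternative you relegate to a fallback is the paper's actual Part II. Your primary route --- total-variation ergodicity and coupling of the autonomous scalar $x_1$ equation, then pathwise synchronization of the downstream ODEs via $\frac{d}{dt}(x_i-\tilde x_i)=-(F_i(x_i)-F_i(\tilde x_i))$ --- is in substance the mechanism the paper deploys for the stationary-noise results (Part II of the proof of Theorem \ref{cor:arb_lineargrowth}, with the ratio $H_i(t)$ and the ``positive fraction of time in a compact set'' step), transplanted to the white-noise setting. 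Each approach buys something: yours yields an almost-sure pathwise statement ($|x(t)-x^*(t)|\to 0$) that is stronger than convergence of laws and avoids hypoelliptic machinery, at the cost of upgrading the time-averaged tightness to almost-sure positive-density-in-time confinement so that $\int_0^\infty H_i(s)\,ds=\infty$ (a nontrivial step the paper itself only sketches in the stationary case) and of delivering weak rather than total-variation convergence; the paper's route yields smooth transition densities and convergence via a well-oiled general theory that does not lean on the chain's one-directional ordering.
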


To prove the existence of a stationary solution to equation
\eqref{sscstation}, we need to assume that the distribution of the
future of the noise, $\xi(t)$, is determined by its past (such as for
Markov processes). This is made precise in Appendix
\ref{assumptionXi}.  We also need additional assumptions on the
functions $F_i$. We give two versions of these assumptions.

\begin{theorem} Let $\xi$ be as in Appendix \ref{assumptionXi}. Under
  the additional assumption that the rate functions $F_i$ are
  unbounded as $x \to \infty$, equation \eqref{sscstation} possesses a
  unique stationary solution, $x^*(t)$.  Furthermore, any solution
  $x(t)$ to equation \eqref{sscstation} converges to $x^*(t)$ as $t
  \to \infty$.
  \label{cor:arb_lineargrowth}
\end{theorem}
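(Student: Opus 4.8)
The plan is to establish existence, uniqueness, and convergence for equation \eqref{sscstation} under the unboundedness hypothesis on the $F_i$. The key structural observation is that the chain is \emph{cascaded}: the equation for $x_1$ depends only on $x_1$ and the driving noise $\xi$, the equation for $x_2$ depends only on $x_2$ and (through $F_1(x_1)$) on $x_1$, and so on. Thus I would argue inductively along the chain, reducing the $n$-dimensional problem to a sequence of scalar non-autonomous equations, each driven by a stationary ``input'' supplied by the preceding coordinate.

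First I would treat the base case of $x_1$, governed by $\dot x_1 = I - F_1(x_1) + \xi(t)$. Because $\xi$ is itself stationary and (by the Appendix \ref{assumptionXi} hypothesis) its future is determined by its past, I would construct a stationary solution by running the dynamics from time $-\infty$: solve the equation on $[-T, t]$ with arbitrary initial data and let $T \to \infty$. The crucial a priori bound is that solutions cannot escape to $+\infty$: since $F_1$ is unbounded and increasing with $\lim_{x\to\infty} F_1(x) > I$, for $x_1$ large the drift $I - F_1(x_1)$ is strongly negative, dominating the bounded-mean perturbation $\xi$ (recall $\xi \ge -I$, so $I + \xi \ge 0$, and $\xi$ has finite variance). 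This gives a confining / dissipative estimate forcing the solution into a bounded region; contraction of two solutions follows from $F_1' > 0$, which makes $\frac{d}{dt}(x_1 - \tilde x_1) = -(F_1(x_1) - F_1(\tilde x_1))$ have the same sign structure as $-(x_1 - \tilde x_1)$, yielding monotone decay of the difference. This simultaneously delivers uniqueness of the stationary solution and convergence of arbitrary trajectories to it.

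Next I would carry out the inductive step. Suppose coordinates $x_1^*, \dots, x_{k-1}^*$ have been shown to form a unique stationary solution to which arbitrary trajectories converge. The equation $\dot x_k = F_{k-1}(x_{k-1}^*(t)) - F_k(x_k)$ is a scalar equation with a \emph{stationary} forcing term $g(t) := F_{k-1}(x_{k-1}^*(t))$; moreover $g$ has mean $I$ and finite variance (the mean being $I$ is the statement, used throughout the paper, that every flux has mean equal to the input). I would apply the same pull-back-from-$-\infty$ construction and the same contraction estimate based on $F_k' > 0$ and the unboundedness of $F_k$ to produce a unique stationary $x_k^*$ and establish convergence. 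Because the forcing $g$ is a measurable functional of the past of $\xi$, the resulting $x_k^*(t)$ inherits the ``future-determined-by-past'' structure, so the induction hypothesis is preserved and closes.

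The main obstacle will be making the pull-back construction rigorous for a forcing that is merely stationary and almost surely continuous rather than Markovian, and verifying that the limit as $T \to \infty$ exists and is itself stationary. The contraction estimate handles uniqueness cleanly once a stationary solution exists, but existence requires showing that the family of solutions started at time $-T$ is Cauchy (or relatively compact) as $T \to \infty$; here I expect to lean on the dissipativity bound to get uniform-in-$T$ control and on the contraction to get the Cauchy property, while the ``future-determined-by-past'' assumption is what lets me identify the limit as a genuine stationary process consistent with the given noise. Propagating the finite-variance and mean-$I$ properties of the flux $F_{k-1}(x_{k-1}^*)$ through the induction is routine but must be checked at each stage so that the next scalar problem has an admissible forcing.
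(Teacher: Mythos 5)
Your two mechanisms --- confinement coming from the unboundedness of the $F_i$, and pathwise contraction coming from $F_i'>0$, applied coordinate by coordinate down the chain --- are exactly the mechanisms of the paper's proof, and your contraction argument for uniqueness is essentially the paper's Part II. But your existence route (pull-back from $-\infty$) differs from the paper's, which instead makes the pair $(x(t),\gamma_t)$ of solution plus noise history a Feller Markov process on $\R^n\times C_{ae}((-\infty,0],\R)$, proves tightness of Krylov--Bogolyubov time averages via the coordinate-wise bounds $\frac{1}{t}\int_0^t P\{|x_i(s)|>R\}\,ds \lesssim 1/F_i(R)$, extracts an invariant measure by Prohorov, and produces the stationary solution by Kolmogorov extension. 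As written, your route has a genuine gap: there is no pathwise ``bounded confining region.'' The noise $\xi$ is only stationary with finite variance, not bounded above, so the drift $I-F_1(x_1)$ never dominates $\xi(t)$ pointwise, and solutions along a fixed realization are not absorbed into any compact set. The only confinement available is statistical (a Chebyshev-type occupation bound, which is where unboundedness of $F_1$ actually enters). This deficiency then undermines your Cauchy claim: the difference of two pulled-back solutions decays like $\exp\bigl(-\int H_1\bigr)$ with $H_1(t)=(F_1(x_1)-F_1(\tilde x_1))/(x_1-\tilde x_1)$, and since $F_1'$ has no global positive lower bound, $\int H_1$ diverges only if the trajectories spend a positive fraction of time in a compact set on which $H_1\ge d_1>0$ --- precisely the statistical estimate you have not established. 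So your pull-back construction secretly requires the same tightness bounds the paper proves first, at which point the paper's extended-state-space argument is the more direct way to convert them into existence.

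The second gap is in your inductive convergence step. There you compare $x_k$, which is forced by $F_{k-1}(x_{k-1}(t))$ for an arbitrary trajectory, with $x_k^*$, which is forced by $F_{k-1}(x_{k-1}^*(t))$: these are \emph{different} forcings, so ``the same contraction estimate'' does not close the argument, and because the contraction rate is non-uniform you cannot simply combine ``forcing error tends to zero'' with contraction. The paper needs a real additional device at exactly this point: sign preservation of differences, the integrated bound $\eta_{k-1}(t)=\int_0^t \bigl(F_{k-1}(x^*_{k-1}(s))-F_{k-1}(y_{k-1}(s))\bigr)ds \le x^*_{k-1}(0)-y_{k-1}(0)$, an integration by parts against $e^{-\int_s^t H_k(r)dr}$, and a rescaling of the analysis to the interval $[t/2,t]$ to make the $\eta$ contribution vanish in the limit. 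Your sketch contains none of this. Finally, note that closing your induction requires verifying at each stage that the flux $F_{k-1}(x^*_{k-1})$ is an admissible forcing (mean $I$, \emph{finite variance}, future determined by past); the finite-variance claim is not free --- it is essentially the variance-decrease computation of Theorem \ref{nonlineardecrease} --- so your induction would have to interleave that argument as well, a complication the paper's all-coordinates-at-once existence proof avoids entirely.
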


In the event that any of the $F_i$ are bounded, we need a bound on the size of
$\xi(t)$.
\begin{theorem} Let $\xi$ be as in Appendix \ref{assumptionXi}. Define
  $K = \displaystyle{\min_{i}\{\lim_{x \to \infty}F_i(x) - I\}}$.
  Under the additional assumption that $-I \le \xi(t) \le M < K$, for
  all $t$ and some $M < K$, equation \eqref{sscstation} possesses a
  unique stationary solution, $x^*(t)$.  Furthermore, any solution
  $x(t)$ to equation \eqref{sscstation}, converges to $x^*(t)$ as $t
  \to \infty$.
\label{cor:arb_bdd}
\end{theorem}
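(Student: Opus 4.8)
The plan is to construct the stationary solution as a pullback limit and to obtain uniqueness and convergence from the monotone (cooperative) structure of \eqref{sscstation}. The only place where the hypothesis $-I \le \xi(t) \le M < K$ is needed is in producing a compact, forward-invariant, absorbing box; once that is in hand the remainder of the argument runs in parallel with the proof of Theorem \ref{cor:arb_lineargrowth}, where unboundedness of the $F_i$ supplied the a priori bound instead.

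First I would produce the absorbing set. Since $M < K = \min_i\{\lim_{x\to\infty}F_i(x) - I\}$, for every $i$ we have $I + M < \lim_{x\to\infty} F_i(x)$, so by Assumption \ref{ssc_assump} there is a unique $\bar x_i > 0$ with $F_i(\bar x_i) = I + M$. Working down the chain, if $x_1 = \bar x_1$ then $\dot x_1 = I + \xi(t) - F_1(\bar x_1) = \xi(t) - M \le 0$, while if $x_1 = 0$ then $\dot x_1 = I + \xi(t) \ge 0$; hence $[0,\bar x_1]$ is forward invariant and absorbing for the first coordinate. Inductively, once $x_{i-1} \le \bar x_{i-1}$ the incoming flux satisfies $F_{i-1}(x_{i-1}) \le I+M = F_i(\bar x_i)$, so the same barrier argument makes $[0,\bar x_i]$ forward invariant and absorbing. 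Thus every solution eventually enters the invariant box $\mathcal B = \prod_i [0,\bar x_i]$; this is exactly where the bound $M<K$ is used.

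Next I would exploit monotonicity. The vector field in \eqref{sscstation} is cooperative: the only nonzero off-diagonal entries of its Jacobian are the subdiagonal terms $\partial \dot x_i / \partial x_{i-1} = F_{i-1}'(x_{i-1}) \ge 0$, so the flow preserves the componentwise order and two solutions driven by the same noise with ordered data stay ordered. Fixing a realization of $\xi$, let $\phi(0,s;v)$ be the solution at time $0$ started at time $s<0$ from $v$. Running the pullback from the bottom corner $v_-=0$ and the top corner $v_+=(\bar x_1,\dots,\bar x_n)$ of $\mathcal B$, forward invariance gives that $\phi(0,s;v_-)$ is nondecreasing and $\phi(0,s;v_+)$ is nonincreasing as $s \to -\infty$, both staying in $\mathcal B$; hence each converges, to limits $x^*_-(0) \le x^*_+(0)$, and monotonicity squeezes every other pullback between them. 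Defining $x^*(t,\omega)$ through the shifted noise and invoking the stationarity of $\xi$ together with the assumption of Appendix \ref{assumptionXi} that the future of the noise is determined by its past yields a stationary process, while absorption into $\mathcal B$ plus the squeeze gives $x(t) \to x^*(t)$.

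The main obstacle is proving $x^*_-(0) = x^*_+(0)$, i.e. that the gap between the top and bottom pullback solutions actually collapses, since this is what forces uniqueness. I would argue coordinate by coordinate. For the difference $g_1 = x_1^{b} - x_1^{a} \ge 0$ of two ordered first coordinates, $\dot g_1 = -(F_1(x_1^b) - F_1(x_1^a)) = -F_1'(\zeta)\,g_1$ for an intermediate point $\zeta$, so $g_1$ is nonincreasing; strict monotonicity gives $F_1'>0$ on every compact subset of $(0,\infty)$, and the standing assumption that $\xi$ is non-constant on all sufficiently long intervals prevents both solutions from being pinned near $0$ long enough to stall the contraction, forcing $g_1 \to 0$. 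Feeding $F_1(x_1^b) - F_1(x_1^a) \to 0$ into the $x_2$-equation and repeating propagates the collapse down the chain, so all coordinates of $x^*_+ - x^*_-$ vanish. Making the ``cannot be pinned near zero'' step quantitative — ruling out the degenerate possibility that $F_i'(\zeta)$ is integrable in time along the trajectory — is the delicate point, and is precisely where the non-degeneracy of the forcing, rather than merely its boundedness, is used.
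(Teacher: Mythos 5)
Your overall architecture is sound, and the existence half is a genuinely different argument from the paper's. The paper builds no invariant box and makes no use of the order structure to get existence: it derives $\limsup_{t\to\infty} x_i(t) \le F_i^{-1}(I+M)$ (the same place $M<K$ enters as in your barrier computation), feeds this through Chebychev's inequality into tightness of time-averaged laws of the pair (solution, noise history) on the extended space $\R^n \times C_{ae}((-\infty,0],\R)$, and then applies Prohorov's theorem and Kolmogorov's extension theorem, exactly as in the proof of Theorem \ref{cor:arb_lineargrowth}, to produce the stationary solution. Your monotone pullback from the two corners of $\mathcal{B}=\prod_i[0,\bar x_i]$, using the cooperative (Kamke) structure of \eqref{sscstation}, replaces that weak-compactness machinery with an elementary monotone-limit construction; what it buys is constructiveness (the stationary solution is an honest limit of trajectories, measurable in the noise by monotone convergence), at the price of being special to the bounded-noise case --- it cannot be recycled for Theorem \ref{cor:arb_lineargrowth}, where no compact invariant box exists, whereas the paper's tightness scheme handles both theorems uniformly. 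For uniqueness and forward convergence your argument is the same in spirit as the paper's: Part II of the proof of Theorem \ref{cor:arb_lineargrowth} (to which the paper's proof of Theorem \ref{cor:arb_bdd} simply refers) likewise integrates the ordered difference against the factor $e^{-\int H_i}$, with $H_i$ the difference quotient of $F_i$, and propagates the collapse down the chain via crossing times, the control terms $\eta_i(t)=\int_0^t \left(F_i(x_i^*(s))-F_i(y_i(s))\right)ds$, and a time-rescaling trick.

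The step you flag as delicate is indeed the crux, and you have located it correctly; be aware, though, that the mechanism you appeal to is not quite the right one. The paper closes this point in one sentence, asserting from its tightness bound \eqref{totalbound} that both solutions spend a positive fraction of time in a compact set on which $H_1(t)\ge d_1>0$, so that $\int_0^t H_1\to\infty$. In your setting the issue is narrower: after absorption both trajectories lie in the compact box, so the only possible degeneracy of $H_1$ is at the origin, where Assumption \ref{ssc_assump} permits $F_1'(0)=0$. The standing non-constancy assumption on $\xi$ that you invoke does \emph{not} suffice to rule out pinning there, since $\xi$ may be non-constant yet hover arbitrarily close to $-I$ for long stretches. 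What does work is the mean-zero character of the forcing: if $x_1 \le \delta$ on $[t_0,t_1]$, then integrating $\dot x_1 \ge I+\xi(t)-F_1(\delta)$ gives
\begin{equation*}
  \frac{1}{t_1-t_0}\int_{t_0}^{t_1}\xi(s)\,ds \;\le\; -\bigl(I-F_1(\delta)\bigr) + \frac{\delta}{t_1-t_0},
\end{equation*}
so lingering near the origin forces the running average of $\xi$ down to nearly $-I$; since $\xi$ is stationary with mean zero, such stretches occupy a vanishing fraction of time (pathwise if the stationary measure of $\xi$ is ergodic, in expectation in general), which is exactly the positive-fraction-of-time lower bound on $H_1$ that the contraction needs. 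With that substitution your uniqueness step closes, and the rest of your proposal stands.
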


In the white in time setting, the system is in fact ergodic and hence
by Birkhoff's ergodic theorem we know that for almost every
realization the time average of any statistic converges to the value
of the statistic in the invariant measure. Combining this with the
strong mixing properties of such a system we have that
\begin{equation*}
  \lim_{t \to \infty} \frac{1}{t}\int_0^t
  (F_i(x_i(s)) - I)^2 ds=  \var\left(F_{i}(x_{i}^*(t)) \right)
\end{equation*}
for almost every realization of the Brownian forcing and every initial
condition $x_0$.  For this to hold in the setting of
\eqref{sscstation}, we need to assume in addition that the stationary
measure on $\xi$ is ergodic. Even without this assumption, the next
section shows that one can say something in general. This underlines
the fact that the decrease of variance is really a pathwise phenomenon
due to the dynamics.

\subsection{Pathwise perturbations}
\label{sec:paths}

The variance described in the previous sub-section is computed with
respect to the probability measure of the perturbations.  More
precisely, if $\omega$ is the realization of the perturbation then
$Var(F_i(x(t,\omega))) = \E_{\omega}(F(x_i(t,\omega)) - I)^2$, i.e. is
an average over the realizations of the noise. Another natural way to
characterize how perturbations propagate down chains is to consider
the time averages of paths.  Consider again the dynamics given by
equation \eqref{sscstation}, except now the only assumptions on
$\xi(t)$ are pathwise assumptions:
\begin{equation}
  i.) \lim_{t \to \infty} \frac{1}{t}\int_0^t \xi(s) ds = 0 \ \ \hbox{ and
  } \ \ ii.)  \limsup_{t \to \infty} \frac{1}{t}\int_0^t
  \xi(s)^2 ds < \infty. 
  \label{zeroavg}
\end{equation}
That is, we now assume that the time average for $\xi(t)$ converges to
zero and that the time average of the square is bounded above.  The
following theorem states that the pathwise variances of the fluxes
do not increase  down reaction chains and is proved in Appendix \ref{appA_avg}.

\begin{theorem}
  Consider equation \eqref{sscstation} where $\xi(t)$ satisfies
  \eqref{zeroavg}.  Then for all $i \ge 1$, the following hold:
  \begin{enumerate}
  \item $\displaystyle \lim_{t \to \infty} \frac{1}{t}\int_0^t
    F_i(x_i(s))ds = I$.
  \item $\displaystyle\liminf_{t \to \infty} \left(\frac{1}{t}\int_0^t
      \xi(s)^2 ds - \frac{1}{t}\int_0^t (F_i(x_i(s)) - I)^2 ds \right)
    \ge 0$.
  \item $\displaystyle \liminf_{t \to \infty} \left(\frac{1}{t}\int_0^t
      (F_i(x_i(s)) - I)^2 ds - \frac{1}{t}\int_0^t
      (F_{i+1}(x_{i+1}(s)) - I)^2 ds \right) \ge 0$.
  \end{enumerate}
  \label{thm:avg}
\end{theorem}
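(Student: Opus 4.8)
The plan is to reduce all three statements to two structural facts: a telescoping identity for the partial sums of the concentrations, and a ``discrete energy'' identity for consecutive fluxes. Write $\langle f\rangle_t = \frac1t\int_0^t f(s)\,ds$, and introduce the centered fluxes
\begin{equation*}
  g_0(t) := \xi(t), \qquad g_i(t) := F_i(x_i(t)) - I \quad (1 \le i \le n).
\end{equation*}
Then equation \eqref{sscstation} takes the uniform form $\dot x_i = g_{i-1} - g_i$ for every $i \ge 1$ (the case $i=1$ reads $\dot x_1 = \xi - (F_1(x_1)-I) = g_0 - g_1$). Summing the first $k$ equations, the intermediate fluxes telescope and the partial sum $S_k := \sum_{j=1}^k x_j$ obeys $\dot S_k = g_0 - g_k = I - F_k(x_k) + \xi$. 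Integrating gives the master relation
\begin{equation*}
  \langle F_k(x_k)\rangle_t = I + \langle \xi\rangle_t - \frac{S_k(t) - S_k(0)}{t}.
\end{equation*}
Statement 1 will follow from this once we know that the concentrations grow sublinearly, i.e.\ $x_i(t)/t \to 0$; statements 2 and 3 will follow from an energy identity that, remarkably, needs no growth information at all.

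\textbf{Statements 2 and 3 via an energy identity.}
These two claims are a single computation. Using $a^2 - b^2 = (a-b)(a+b)$ with $g_{i-1} - g_i = \dot x_i$ and $g_{i-1} + g_i = \dot x_i + 2g_i$, one gets the pointwise identity $g_{i-1}^2 - g_i^2 = \dot x_i^2 + 2\dot x_i\, g_i$. The cross term is a total derivative: setting $G_i(y) := \int_0^y (F_i(u) - I)\,du$, the chain rule gives $\dot x_i\, g_i = \frac{d}{dt} G_i(x_i(t))$, so that time-averaging yields
\begin{equation*}
  \langle g_{i-1}^2\rangle_t - \langle g_i^2\rangle_t
  = \langle \dot x_i^2\rangle_t + \frac{2}{t}\Big(G_i(x_i(t)) - G_i(x_i(0))\Big).
\end{equation*}
For $i\ge 2$ this is statement 3, and for $i=1$ (recalling $g_0=\xi$) it is statement 2. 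The first term on the right is nonnegative. For the second, observe that $G_i'(y) = F_i(y) - I$ is negative for small $y$ and positive for large $y$ by parts b) and \ref{nobuildup}) of Assumption \ref{ssc_assump}; hence $G_i$ attains a finite global minimum (at $F_i^{-1}(I)$) and is bounded below. Therefore $\frac{2}{t}G_i(x_i(t)) \ge \frac{2}{t}\min G_i \to 0$, while $-\frac{2}{t}G_i(x_i(0)) \to 0$, so the $\liminf$ of the right-hand side is $\ge 0$. This proves statements 2 and 3 cleanly, with the lower boundedness of the potential $G_i$ doing all the work.

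\textbf{Statement 1 and the main obstacle.}
From the master relation, $\langle \xi\rangle_t \to 0$ (hypothesis i of \eqref{zeroavg}) together with $S_k(t) \ge 0$ immediately gives $\limsup_t \langle F_k(x_k)\rangle_t \le I$ for every $k$, for free. The content of statement 1 is the matching lower bound, which is equivalent to $S_k(t)/t \to 0$, i.e.\ to the sublinear growth $x_i(t)/t \to 0$ of each concentration. This is the one genuinely technical point, and I expect it to be the main obstacle: under the minimal pathwise hypotheses \eqref{zeroavg} the forcing $\xi$ need not be bounded, so $x_1$ need not be bounded, and the energy estimates above only control $x_i(t)/t$ up to a finite constant (e.g.\ bounding $\frac1t G_i(x_i(t))$ by $\tfrac14\langle \xi^2\rangle_t$ via Cauchy--Schwarz gives boundedness, not decay). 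To upgrade to $x_i(t)/t\to 0$ I would exploit the strict no-buildup margin from condition \ref{nobuildup}): there exist $R$ and $\varepsilon>0$ with $F_i(y)-I \ge \varepsilon$ for $y \ge R$, so the drift of $S_k$ is at most $-\varepsilon + \xi$ whenever $x_k \ge R$. Combining this dissipativity with the vanishing of the time average $\langle\xi\rangle_t$ — via an occupation-time estimate bounding the fraction of time spent at large concentration, or equivalently a Lyapunov argument on $S_k$ — should force the growth rate to be exactly zero. Once $x_i(t)/t\to 0$ is in hand, the master relation and $\langle\xi\rangle_t\to 0$ give $\langle F_k(x_k)\rangle_t \to I$, completing statement 1. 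The induction implicit in the ordering of the three parts then delivers the full chain of comparisons.
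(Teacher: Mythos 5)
Your proofs of parts 2 and 3 are correct, and they take a genuinely cleaner route than the paper's. Your pointwise identity $g_{i-1}^2-g_i^2=\dot x_i^2+2\frac{d}{dt}G_i(x_i(t))$ is the paper's Young-inequality step (apply $2ab\le a^2+b^2$ to $\frac{d}{dt}G_i=-2g_i^2+2g_ig_{i-1}$) with the slack $(g_{i-1}-g_i)^2=\dot x_i^2$ kept explicit; the real gain is your handling of the boundary term. Since $G_i'(y)=F_i(y)-I$ changes sign exactly once, $G_i$ is bounded below by the finite constant $G_i(F_i^{-1}(I))\ge -I\,F_i^{-1}(I)$ (this is where parts b) and \ref{nobuildup}) of Assumption \ref{ssc_assump} enter), so $\liminf_{t}\frac1t G_i(x_i(t))\ge 0$ with no information about the growth of $x_i$. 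The paper instead bounds this boundary term by $-2I\,x_1(t)/t$ and therefore needs the sublinear-growth estimate $x_1(t)/t\to 0$ even for part 2; moreover, its part 3 is part 2 rerun at level $i+1$ with input $g_i$, which first requires checking that $g_i$ satisfies \eqref{zeroavg}, i.e., parts 1 and 2 at level $i$. Your argument makes parts 2 and 3 hold for every $i$ with no induction and no use of \eqref{zeroavg} at all, which if anything sharpens the paper's own point that the decrease of fluctuations is a pathwise, structural phenomenon.

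The genuine gap is part 1. You correctly reduce it to sublinear growth, $x_i(t)/t\to 0$, but then you only conjecture that a dissipativity/occupation-time argument ``should force the growth rate to be exactly zero''; that is not a proof, and it is exactly the step on which the paper spends most of its effort. As formulated, your sketch also has an internal mismatch: the dissipativity you invoke is triggered by the single coordinate $x_k\ge R$, while the quantity you propose to control is the sum $S_k$, which can grow (through the earlier coordinates) during long stretches where $x_k<R$; so a Lyapunov or occupation-time argument on $S_k$ alone does not close, and one must instead work coordinate-by-coordinate down the chain. For comparison, the paper proves $x_1(t)/t\to 0$ by writing $\frac{d}{dt}(x_1-\bar x_1)=-H(t)(x_1-\bar x_1)+\xi$ with $H(t)$ the positive difference quotient of $F_1$, applying Duhamel's formula, and integrating by parts so that only the antiderivative $\int_0^s\xi$ appears; since the kernel satisfies $\int_0^t H(s)e^{-\int_s^t H(r)dr}ds\le 1$, hypothesis i) of \eqref{zeroavg} (in the form $\sup_{s\le t}\bigl|\int_0^s\xi\bigr|/t\to 0$) yields the claim, using only strict monotonicity of $F_1$ and not even the margin in \ref{nobuildup}). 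Your idea can be repaired in single-coordinate form: if $x_1(t)\ge ct$, let $\tau$ be the last time $x_1=R$, where $F_1\ge I$ on $[R,\infty)$; on $[\tau,t]$ the drift is $\le\xi$, so $x_1(t)\le R+x_1(0)+2\sup_{s\le t}\bigl|\int_0^s\xi\bigr|=o(t)$, a contradiction. Then parts 1 and 2 at level $i$ show $g_i$ satisfies \eqref{zeroavg}, enabling the induction you allude to. But some such argument must actually be supplied; until it is, part 1 is unproven.
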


\section{MSC chains with random perturbations}
\label{sec:MSCchains}

We now consider MSC chains with random perturbations.  We will again
allow perturbations that are white in time or that are stationary,
mean zero, finite variance and continuous for almost every $t$ and
that satisfy the conditions of Appendix \ref{assumptionXi}.  Consider
a reaction chain, \eqref{reactionchain}, where each complex, $C_i$,
consists of $m_i$ unique species and no species is contained in more
than one complex.  Thus, if $x(t)$ is the vector representing the
species concentrations at time $t$, then $x(t) \in \R^{m_1 + \cdots +
  m_n}$.  Let $X_{i}^{j}$ represent the $j$th species in complex $i$
and $v_{ij}$ be the multiplicity of species $X_{i}^{j}$ in Complex
$i$.  For example, if the reaction chain is
\begin{equation*}
  \longrightarrow X_{1}^{1} + 2X_{1}^{2} \longrightarrow 3X_{2}^{1}
  \longrightarrow,
\end{equation*}
then $v_{11} = 1, v_{12} = 2,$ and $v_{21} = 3$.  

If $F_i$ represents the reaction rate from complex $C_i$ to complex
$C_{i+1}$ we have that,
\begin{equation*}
  F_i(x(t)) = F_i(x_i^1,\dots,x_i^{m_i}):\R^{m_i} \to \R.
\end{equation*}
We assume each $F_i$ satisfies Assumption \ref{msc_assump} which is
analogous to Assumption \ref{ssc_assump}.
\begin{assumption} $F_i$ is a real valued $C^1$ function of
  $[0,\infty)^{m_i}$ with the following properties:
  \begin{enumerate}[a)]
  \item If $x_i^j = 0$ for any $1 \le j \le m_i$, then $F_i(x) = 0$.
  \item If $1 \le j \le m_i$ and $x \in \R^{m_i}_{>0}$, then
    $\frac{d}{dx_i^j}F_i(x) > 0$.
  \item $\exists M > 0$ such that if $x_i^j > M$ for all species in
    the $ith$ complex, then $F_i(x) > I$.
  \end{enumerate}
  \label{msc_assump}
\end{assumption}

As in the SSC case, if we want to add a random perturbation to the
input flux of the system, we must only consider perturbations that
will never drive concentrations into the negative portion of the real
line.  We handle this issue in a similar manner as in the SSC case:

If the perturbation is white in time, we multiply the perturbation by
a function which will go to zero if the concentration of one of the
species in the first complex goes to zero.  Therefore, let
$\theta_{\delta}(x_1^{1},\dots,x_1^{m_1}): \R^{m_1} \rightarrow
\R_{\ge 0}$ satisfy the following three properties for some small
$\delta >0$.
\begin{enumerate}
\item $\theta_{\delta}(x) = 1$ when each $x_1^j > \delta$.
\item $\theta_{\delta}(x) = 0$ if $x_1^j = 0$ for any $1 \le j \le m_1$.
\item $\theta_{\delta}$ is $C^{\infty}$ and is monotone increasing in
  each of the variables $x_1^{1},\dots,x_1^{m_1}$.
\end{enumerate}
If we add a white noise perturbation multiplied by
$\theta_{\delta}(x)$ to the input of the system, then the dynamics are
now governed by the stochastic differential equation
\begin{equation}
  dx(t) = f(x(t))dt + \sigma \theta_{\delta}(x)dB(t) u,
\label{mscsdes}
\end{equation}
where $u = [v_{11},v_{12},\dots, v_{1m_1},0,\dots,0]^T$, $\sigma \in
\R_{>0}$, $B(t)$ is standard one dimensional Brownian motion, and
$f_1^i(x) = v_{1i}(I - F_1(x))$ for each $1 \le i \le m_1$, $f_2^i =
v_{2i}(F_{1}(x) - F_{2}(x))$, for each $1 \le i \le m_2$, $\dots$,
$f_n^i = v_{ni}(F_{n-1}(x) - F_{n}(x))$ for each $1 \le i \le m_n$.

If the perturbation is a mean zero, finite variance, stationary
process, $\xi(t,\omega)$, that for each $\omega$ is continuous for
almost all $t$ and that satisfies the conditions of Appendix
\ref{assumptionXi}, then in order to keep the concentrations
non-negative, we again assume that $\xi(t,\omega) \ge -I$ for all $t$
and $\omega$.  In this case the dynamics of the system are governed by
the differential equation
\begin{equation}
  \dot x(t) = f(x(t)) + \xi(t)u, 
  \label{mscstationary}
\end{equation}
where $f$ and $u$ are as above.

The following four theorems are analogous to those in the SSC case and
their proofs can be found in Appendix \ref{msc_proof}.

\begin{theorem}[Decreasing variance down a nonlinear MSC chain]
  Let $x^*(t)$ be a stationary solution for the dynamics given by
  either equation \eqref{mscsdes} or \eqref{mscstationary}.  Then for
  all $i \ge 1$ and $t \ge 0$
  \begin{equation*}
    \var \left(F_{i}(x^*(t)) \right) > \var \left(F_{i+1}(x^*(t)) \right).
  \end{equation*}
  \label{thm:msc}
\end{theorem}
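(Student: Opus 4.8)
The plan is to follow the template of the SSC argument (Theorem \ref{nonlineardecrease}), exploiting the structural fact that in an MSC chain the species inside a single complex all move proportionally, so that a multi-species complex behaves like one effective scalar species. Fix $i \ge 1$ and abbreviate $a = F_i(x^*(t))$ and $b = F_{i+1}(x^*(t))$, the fluxes into and out of complex $C_{i+1}$. Write $y = (x_{i+1}^1,\dots,x_{i+1}^{m_{i+1}})$ for the concentrations making up $C_{i+1}$ and $v = (v_{(i+1)1},\dots,v_{(i+1)m_{i+1}})$ for the (strictly positive) multiplicity vector. Because $C_{i+1}$ carries no noise when $i\ge 1$, each component of $y$ obeys the ordinary ODE $\dot x_{i+1}^j = v_{(i+1)j}(a-b)$ in both \eqref{mscsdes} and \eqref{mscstationary}; in particular $y(t)$ always moves along the line through its current position in the direction $v$. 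I would first record the flux–conservation identity $\E[F_k(x^*(t))]=I$ for every $k$, which follows by taking expectations in the dynamics and using stationarity (the martingale term in \eqref{mscsdes} has mean zero and $\E[\xi]=0$ in \eqref{mscstationary}), so that $\E[a]=\E[b]=I$.

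The key new ingredient, replacing the single-variable antiderivative used in the SSC proof, is a scalar potential $g:\R^{m_{i+1}}_{\ge 0}\to\R$ depending only on the coordinates of $C_{i+1}$ and satisfying the directional-derivative identity $v\cdot\nabla g = F_{i+1}-I$. This is a linear transport equation that I solve by integrating $F_{i+1}-I$ along the characteristic lines in the direction $v$ (extending $F_{i+1}$ to a $C^1$ function off the positive orthant so the line integrals are defined; only the value of $v\cdot\nabla g$ at points of the orthant matters, and there it equals $F_{i+1}-I$). Given such a $g$, the chain rule together with $\dot y = v(a-b)$ gives $\tfrac{d}{dt}g(y(t)) = (v\cdot\nabla g)(a-b) = (b-I)(a-b)$, which is exactly the integrand that appeared in the SSC case. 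Since $x^*$ is stationary, $\E[g(y(t))]$ is constant in $t$, so interchanging $\tfrac{d}{dt}$ with $\E$ (justified by the moment bounds coming from Theorems \ref{measurelemma}--\ref{cor:arb_bdd}) yields $\E[(b-I)(a-b)]=0$. Writing $a-b=(a-I)-(b-I)$ and using $\E[a]=\E[b]=I$ gives $\mathrm{Cov}(a,b)=\var(b)$, hence $\var(a)-\var(b)=\var(a-b)\ge 0$, which is the inequality in non-strict form.

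Upgrading $\ge$ to $>$ is where I expect the real work. Equality forces $\var(a-b)=0$, and since $\E[a-b]=0$ this means $a=b$ almost surely, so $\dot y\equiv 0$, the complex $C_{i+1}$ is frozen, and $b=F_{i+1}(y)$ is a.s. constant, i.e. $\var(F_{i+1})=0$. Thus it suffices to prove $\var(F_k(x^*))>0$ for every flux, and I would do this by forward induction along the chain using the genuine randomness of the forcing. For the base case, $\var(F_1)>0$: if instead $F_1\equiv I$ a.s., then $\dot x_1^j=v_{1j}\xi(t)$ (resp. the corresponding SDE), and since $v\cdot\nabla F_1>0$ by Assumption \ref{msc_assump}(b) the map $F_1$ is strictly increasing along the $v$-direction, so $F_1\equiv I$ forces $\xi\equiv 0$ (resp. contradicts nondegeneracy of $B$), against the non-constancy of the perturbation. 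For the inductive step, if $\var(F_k)>0$ then $\var(F_{k+1})>0$: were $\var(F_{k+1})=0$, the state $C_{k+1}$ would stay on a level set of $F_{k+1}$ while its drift $v_{k+1}(F_k-F_{k+1})$ pushes transversally to that level set (again by $v\cdot\nabla F_{k+1}>0$), forcing $F_k\equiv I$ and contradicting $\var(F_k)>0$.

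The main obstacle is therefore the strictness: it rests on propagating positivity of the flux variance down the chain from the genuinely random input, rather than on the comparison inequality itself. The comparison $\var(F_i)\ge\var(F_{i+1})$ is, by contrast, essentially the SSC computation once the potential $g$ is in hand, so the construction of $g$ (and the attendant domain/extension bookkeeping) is the only conceptually new step relative to Theorem \ref{nonlineardecrease}.
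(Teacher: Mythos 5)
Your non-strict comparison is sound, and it is worth noting that your potential $g$ is the paper's actual proof in disguise: since $\dot x_{i+1}^j = v_{(i+1)j}(a-b)$ for every $j$, the species within a complex are affine functions of one another ($x_i^j(t) = d_{ijk}x_i^k(t)+c_{ijk}$), so the paper simply selects one representative species per complex, redefines the kinetics, and quotes the SSC results (Theorem \ref{nonlineardecrease} together with the existence theorems) verbatim. Your $g$, restricted to the invariant line $\{y(0)+sv : s\in\R\}$ on which the solution actually lives, is exactly the SSC antiderivative $G_1(x)=2\int_0^x(F_{i+1}(y)-I)\,dy$ in the reduced coordinate, so the transport-equation construction, while correct, buys nothing over the one-line reduction.

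The genuine gap is in your strictness step. From $\var(a-b)=0$ at each $t$ you correctly get $a(t)=b(t)$ almost surely for all $t$ (countable dense set plus continuity, as in the SSC proof), hence $\dot y\equiv 0$ and $y$ frozen in time. But ``frozen in time'' means constant in $t$ for almost every $\omega$, not deterministic: $b=F_{i+1}(y(0))$ is a time-constant \emph{random variable}, and its variance across realizations can be strictly positive. So the inference ``$b$ is a.s.\ constant, i.e.\ $\var(F_{i+1})=0$'' is a non sequitur, and with it your entire reduction of strictness to ``$\var(F_k)>0$ for every $k$'' collapses: the equality scenario, $F_i\equiv F_{i+1}$ with both frozen at a common random value, is perfectly consistent with every flux having positive variance, so your forward induction (which is itself sound, but aimed at the wrong target) produces no contradiction. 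Ergodicity of the stationary solution would repair the inference, but the theorem assumes only a stationary solution and you never invoke ergodicity. The correct move --- the one the paper makes --- is to propagate pathwise time-constancy \emph{upstream} rather than variance-positivity downstream: $y$ frozen forces $F_i(x_i^*(t))$ constant in $t$; since $x_i^*$ moves along its line in direction $v_i$, on which $F_i$ is strictly monotone by Assumption \ref{msc_assump}(b), $x_i^*$ is frozen too, and iterating gives $x_1^*$ constant in time, an event of probability zero under the white noise in \eqref{mscsdes} and one contradicting the standing assumption that $\xi(\cdot,\omega)$ is non-constant on intervals longer than $a(\omega)$ in the setting of \eqref{mscstationary}.
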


\begin{theorem}[Ergodicity of the MSC chain with white noise]
  Equation \eqref{mscsdes} possesses a unique invariant measure,
  $\mu$, on $\mathbb R^n$.  Furthermore, the distribution of any
  solution to equation \eqref{sscsdes} converges to $\mu$ as $t \to
  \infty$.
  \label{msc_measurelemma}
\end{theorem}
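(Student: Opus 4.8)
The plan is to reduce the MSC white-noise dynamics \eqref{mscsdes} to an SSC white-noise chain of the form \eqref{sscsdes} and then invoke Theorem \ref{measurelemma}. The crucial structural observation is that both the drift $f$ and the noise in \eqref{mscsdes} act coherently within each complex. Since $f_i^j(x) = v_{ij}(F_{i-1}(x) - F_i(x))$ (with $F_0 \equiv I$) and the noise vector is $u = [v_{11},\dots,v_{1m_1},0,\dots,0]^T$, the scaled increment $d(x_i^j/v_{ij})$ is identical for every species $j$ in complex $i$: it equals $(F_{i-1}-F_i)\,dt$ for $i \ge 2$ and $(I - F_1)\,dt + \sigma\theta_\delta(x)\,dB(t)$ for $i=1$. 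Consequently each difference $x_i^j/v_{ij} - x_i^k/v_{ik}$ is conserved by the full stochastic flow, and the affine manifold $\mathcal{M}$ on which these differences vanish (the stoichiometric proportions $x_i^j = v_{ij}y_i$) is invariant under \eqref{mscsdes}.

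On $\mathcal{M}$ I would introduce scalar variables $y = (y_1,\dots,y_n)$ via $x_i^j = v_{ij}y_i$ and define the reduced kinetics $\tilde F_i(y_i) := F_i(v_{i1}y_i,\dots,v_{im_i}y_i)$. First I would verify that each $\tilde F_i$ satisfies Assumption \ref{ssc_assump}: part (a) gives $\tilde F_i(0)=F_i(0,\dots,0)=0$; the chain rule with part (b) gives $\tilde F_i'(y) = \sum_j v_{ij}\,\partial_{x_i^j}F_i(x) > 0$ for $y>0$; and part (c) forces $\tilde F_i(y) > I$ once every $v_{ij}y$ exceeds $M$, so $\lim_{y\to\infty}\tilde F_i(y) > I$ by monotonicity. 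Substituting into \eqref{mscsdes} then yields exactly the SSC system \eqref{sscsdes} for $y$, namely $dy_1 = (I - \tilde F_1(y_1))\,dt + \sigma\tilde\theta_\delta(y_1)\,dB(t)$ and $\dot y_i = \tilde F_{i-1}(y_{i-1}) - \tilde F_i(y_i)$ for $i \ge 2$, where $\tilde\theta_\delta(y_1) := \theta_\delta(v_{11}y_1,\dots,v_{1m_1}y_1)$ inherits from the defining properties of $\theta_\delta$ that it is $C^\infty$, monotone increasing, vanishes at $y_1=0$, and equals $1$ for $y_1$ bounded below (after relabelling $\delta$ by $\delta/\min_j v_{1j}$).

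With the reduction in place I would apply Theorem \ref{measurelemma} to the $y$-chain, obtaining a unique invariant measure $\tilde\mu$ on $\mathbb{R}^n$ whose statistics attract those of every solution. Pushing forward along the linear embedding $\iota: y \mapsto x$, $\iota(y)_i^j = v_{ij}y_i$, the measure $\mu := \iota_*\tilde\mu$ is invariant for \eqref{mscsdes} and is the unique invariant measure supported on $\mathcal{M}$; since $\iota$ is continuous, convergence in distribution of $y(t)$ to $\tilde\mu$ transfers to convergence of $x(t)$ to $\mu$, giving the stated ergodicity on the $n$-dimensional reduced state space.

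The main obstacle, and the point distinguishing this from the SSC proof, is verifying compatibility of the degenerate noise with the reduction. Two checks require care: that the single Brownian direction $u$ does not disturb the conserved ratios (handled by the coherence computation above) and that the cutoff $\theta_\delta$, which a priori depends on the whole complex-$1$ vector, restricts on $\mathcal{M}$ to an admissible one-dimensional cutoff $\tilde\theta_\delta$ meeting the hypotheses of Theorem \ref{measurelemma}. I would also stress why the reduction is unavoidable: because the quantities $x_i^j/v_{ij} - x_i^k/v_{ik}$ are genuinely invariant, \eqref{mscsdes} cannot have a unique invariant measure on the full space $\mathbb{R}^{m_1+\cdots+m_n}$; uniqueness holds precisely on each stoichiometric leaf $\mathcal{M}$, which is parametrized by $\mathbb{R}^n$ and accounts for the dimension in the statement. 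An alternative direct route would establish hypoellipticity by checking that the iterated Lie brackets $[f,\sigma u],[f,[f,\sigma u]],\dots$ propagate the lone noise direction successively into complexes $2,3,\dots,n$, together with an irreducibility and a Lyapunov argument from Assumption \ref{msc_assump}(c); but the conserved-quantity obstruction shows these brackets span only the $n$-dimensional tangent to $\mathcal{M}$, so that route merely reproduces the same reduced picture.
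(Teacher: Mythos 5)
Your proposal takes essentially the same route as the paper's proof: the paper also reduces \eqref{mscsdes} to the SSC system \eqref{sscsdes} by observing that species within each complex satisfy constant multiples of the same (stochastic) differential equation, so each complex can be collapsed to a single representative coordinate with redefined kinetics $\tilde F_i$ and cutoff $\tilde\theta_\delta$ satisfying Assumption \ref{ssc_assump}, after which Theorem \ref{measurelemma} applies. Your write-up is in fact more explicit than the paper's terse argument (verifying Assumption \ref{ssc_assump} for $\tilde F_i$, and noting that the conserved differences $x_i^j/v_{ij} - x_i^k/v_{ik}$ force the uniqueness statement to be read on the $n$-dimensional reduced space), but the underlying idea is identical.
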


\begin{theorem} Let $\xi$ be as in Appendix \ref{assumptionXi}. Under
  the additional assumption that the rate functions $F_i$ are
  unbounded as $x \to \infty$, equation \eqref{mscstationary}
  possesses a unique stationary solution, $x^*(t)$. Furthermore, any
  solution $x(t)$ to equation \eqref{mscstationary} converges to
  $x^*(t)$ as $t \to \infty$.
  \label{cor:msc_arb_lineargrowth}
\end{theorem}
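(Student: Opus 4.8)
The plan is to recognize equation \eqref{mscstationary} as an SSC chain in disguise and then invoke Theorem \ref{cor:arb_lineargrowth} directly. The only feature separating the MSC dynamics from the SSC dynamics \eqref{sscstation} is that, inside each complex, the species move in lockstep; once this redundancy is factored out through the chain's conservation laws, the two problems coincide and the existence, uniqueness, and convergence statements transfer verbatim.

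First I would expose the conservation laws and collapse each complex to a single scalar. For the species $X_k^j$ set $z_k^j := x_k^j/v_{kj}$. Using that the drift obeys $f_k^j = v_{kj}(F_{k-1}-F_k)$ and that the noise vector $u$ has entries $v_{1j}$ in the first complex and $0$ elsewhere, equation \eqref{mscstationary} gives $\dot z_1^j = I+\xi(t)-F_1(x)$ and $\dot z_k^j = F_{k-1}(x)-F_k(x)$ for $k\ge 2$, with a right-hand side that does not depend on $j$. Consequently the differences $c_k^j := z_k^j - z_k^1$ (with $c_k^1=0$) are constant in time, producing $\sum_k(m_k-1)$ conserved quantities that are fixed by the initial datum and, since the noise acts only on the first complex and proportionally to the $v_{1j}$, are preserved by the forcing as well. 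Writing $y_k := z_k^1$, every species is then recovered from the affine relation $x_k^j = v_{kj}(y_k+c_k^j)$, and substituting this into the kinetics yields reduced rates $\tilde F_k(y_k):=F_k\big(v_{k1}y_k,\dots,v_{km_k}(y_k+c_k^{m_k})\big)$ together with the scalar chain $\dot y_1=I+\xi(t)-\tilde F_1(y_1)$, $\dot y_k=\tilde F_{k-1}(y_{k-1})-\tilde F_k(y_k)$, which is exactly \eqref{sscstation}.

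Next I would check that these reduced rates satisfy Assumption \ref{ssc_assump} so that Theorem \ref{cor:arb_lineargrowth} applies. Let $a_k:=\max_j(-c_k^j)$ be the value of $y_k$ at which some species in complex $k$ hits zero; translating each coordinate by $y_k\mapsto y_k-a_k$ (independently in each complex, so the chain structure and the location of the input $I$ are unaffected) moves the admissible region to $[0,\infty)$. Vanishing at the origin is Assumption \ref{msc_assump}(a), since at $y_k=a_k$ some $x_k^j=0$; strict positivity of the derivative follows from the chain rule, $\tfrac{d}{dy_k}\tilde F_k=\sum_j v_{kj}\,\partial_{x_k^j}F_k>0$ on the positive cone by Assumption \ref{msc_assump}(b) and $v_{kj}>0$; and the growth condition holds because $y_k\to\infty$ forces every $x_k^j\to\infty$, whence $\tilde F_k>I$ eventually by Assumption \ref{msc_assump}(c) and $\tilde F_k\to\infty$ by the standing unboundedness hypothesis of this theorem. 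Theorem \ref{cor:arb_lineargrowth} then gives a unique stationary solution $y^*(t)$ of the scalar chain with $y(t)\to y^*(t)$ for every trajectory. Lifting back via $x_k^{*j}(t)=v_{kj}(y_k^*(t)+c_k^j)$ produces the stationary solution $x^*(t)$ of \eqref{mscstationary}; its uniqueness is inherited from that of $y^*$, and convergence $y(t)\to y^*(t)$ combined with the constancy of the $c_k^j$ upgrades to $x(t)\to x^*(t)$, as claimed.

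The hard part will be the verification in the previous paragraph, specifically confirming that the conservation constants enter only as an innocuous translation of the domain---so that Theorem \ref{cor:arb_lineargrowth}, stated for rates that vanish at the origin of $[0,\infty)$, still applies after the shift---and that the simultaneous blow-up $x_k^j=v_{kj}(y_k+c_k^j)\to\infty$ genuinely transfers the unboundedness of $F_k$ to the scalar rate $\tilde F_k$. Once these points are settled the remainder is a direct translation of the SSC argument; as a consistency check, the reduced chain is precisely the one to which Theorem \ref{nonlineardecrease} applies, so the decrease-of-variance conclusion of Theorem \ref{thm:msc} is recovered on the same footing.
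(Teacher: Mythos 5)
Your proposal is correct and is essentially the paper's own argument: the combined proof in Appendix \ref{msc_proof} likewise observes that the species within each complex satisfy constant multiples of the same equation, so that $x_i^j(t) = d_{ijk}x_i^k(t) + c_{ijk}$ for constants $d_{ijk}, c_{ijk}$, collapses each complex to a single representative species with redefined kinetics satisfying Assumption \ref{ssc_assump}, and then invokes Theorem \ref{cor:arb_lineargrowth}. Your explicit verification that the conservation constants amount to a harmless translation of the domain and that the unboundedness of $F_k$ transfers to the reduced rate $\tilde F_k$ merely fills in details the paper leaves implicit.
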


\begin{theorem} Let $\xi$ be as in Appendix \ref{assumptionXi}. Define
  $K = \displaystyle{\min_{i}\{\lim_{x \to \infty}F_i(x) - I\}}$.
  Under the additional assumption that $-I \le \xi(t) \le M < K$, for
  all $t$ and some $M < K$, equation \eqref{mscstationary} possesses a
  unique stationary solution, $x^*(t)$. Furthermore, any solution
  $x(t)$ to equation \eqref{mscstationary} converges to $x^*(t)$ as $t
  \to \infty$.
  \label{cor:msc_arb_bdd}
\end{theorem}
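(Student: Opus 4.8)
The plan is to deduce this MSC statement from its SSC counterpart, Theorem \ref{cor:arb_bdd}, by exploiting the rigid proportionality that \eqref{mscstationary} imposes on species sharing a complex. Because the noise direction $u$ and each block of $f$ are scalar multiples of the stoichiometric weights $v_{ij}$, the $j$th species of complex $i$ obeys $\dot x_i^j = v_{ij} g_i(x)$, where $g_1(x) = I + \xi(t) - F_1(x)$ and $g_i(x) = F_{i-1}(x) - F_i(x)$ for $i \ge 2$, and $g_i$ is independent of $j$. Hence $\frac{d}{dt}(x_i^j/v_{ij})$ is common to all $j$, so the quantities $c_i^j := x_i^j/v_{ij} - \min_k x_i^k/v_{ik}$ are conserved along every trajectory. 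Fixing them confines the motion to an invariant affine manifold on which the single coordinate $z_i := \min_k x_i^k/v_{ik}$ determines the complex through $x_i^j = v_{ij}(z_i + c_i^j)$.

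On each such manifold the system collapses to the SSC chain $\dot z_1 = I + \xi - \tilde F_1(z_1)$ and $\dot z_i = \tilde F_{i-1}(z_{i-1}) - \tilde F_i(z_i)$ with reduced kinetics $\tilde F_i(z) := F_i(v_{i1}(z+c_i^1), \dots, v_{im_i}(z+c_i^{m_i}))$. I would then verify that $\tilde F_i$ inherits Assumption \ref{ssc_assump} from Assumption \ref{msc_assump}: $\tilde F_i(0) = 0$ because $z=0$ forces the minimizing species to vanish (part a); $\tilde F_i'(z) = \sum_j v_{ij}\,\partial_{x_i^j}F_i > 0$ for $z>0$ by the chain rule and part b; and, since every $x_i^j \to \infty$ along the ray as $z \to \infty$, part c gives $\tilde F_i(z) > I$ for large $z$ and hence $\lim_{z\to\infty}\tilde F_i(z) > I$. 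Since $\lim_{z\to\infty}\tilde F_i(z) = \lim_{x\to\infty}F_i(x)$, the constant $K$ and the hypothesis $-I \le \xi(t) \le M < K$ transfer verbatim to the reduced chain, and $\xi$ is unchanged by the reduction, so the hypotheses of Appendix \ref{assumptionXi} persist. Theorem \ref{cor:arb_bdd} then yields a unique stationary solution $z^*(t)$ and the convergence $z(t) \to z^*(t)$; lifting via $x_i^{*j}(t) = v_{ij}(z_i^*(t) + c_i^j)$ gives the desired stationary solution, whose nonnegativity is automatic from $z_i^* \ge 0$ and $c_i^j \ge 0$, and convergence in $z$ transfers directly to $x$.

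The step I expect to be the main obstacle is the bookkeeping of the conserved quantities $c_i^j$ in the uniqueness and convergence statements. Since these constants are frozen by the initial data, the stationary solution is unique and globally attracting only within each invariant manifold, and two trajectories on different manifolds cannot share a limit; thus the phrase ``unique stationary solution to which any solution converges'' must be understood relative to the conserved quantities carried by the initial condition, and the write-up should make this fibering explicit so that Theorem \ref{cor:arb_bdd} is applied on the matching manifold. A secondary point is that the reduced kinetics $\tilde F_i$ themselves depend on $c$, so the stationary solution varies from fiber to fiber; this is harmless but should be recorded. Alternatively, one may bypass the reduction and rerun the proof of Theorem \ref{cor:arb_bdd} directly in the MSC variables, propagating the monotone contraction $\frac{d}{dt}\sum_j (x_1^j - \tilde x_1^j)^2 \le 0$ down the cascade; in either route the bound $M < K$ enters exactly as in the SSC case, producing an absorbing compact set from the finite solutions of $F_i = I + M$.
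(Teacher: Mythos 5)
Your proposal is correct and takes essentially the same route as the paper: the paper's own proof likewise observes that species within a complex satisfy proportional equations (so $x_i^j(t) = d_{ijk}x_i^k(t) + c_{ijk}$ for constants fixed by the initial data), collapses each complex to a single representative coordinate with redefined kinetics satisfying Assumption \ref{ssc_assump}, and then applies Theorem \ref{cor:arb_bdd}. Your explicit verification that the reduced $\tilde F_i$ inherit the SSC assumptions, and your remark that uniqueness and convergence are to be read fiberwise relative to the conserved quantities, merely spell out details the paper's terse proof leaves implicit.
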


\begin{example}[Sum of Two Species with Mass Action Kinetics]
  Consider the following MSC chain with mass action kinetics
  \begin{equation*}
    \begin{array}{cccccccc}
      I(t) & & F_1 & & F_2 & & F_3 & \\
      \longrightarrow & Y & \longrightarrow & X_1 + X_2 &
      \longrightarrow & X_3 + X_4 & \longrightarrow, 
    \end{array}
  \end{equation*}
  where $I(t) = 10 + 2\theta_{\delta}(y) dB_t$ (with $\delta = .001$),
  $F_1(y) = y$, $F_2(x_1,x_2) = x_1x_2$, $F_3(x_3,x_4) = x_3x_4$.
  Using Matlab to perform a Monte Carlo simulation we computed the
  means, variances, and coefficient of variation of the fluxes to be
  the following:
  \begin{center}
    \begin{tabular}{|c|c|c|c|}
      \hline
      & $F_1(y)$ & $F_2(x_1,x_2)$ & $F_3(x_3,x_4)$  \\
      \hline
      mean & 10 & 10 & 10 \\
      \hline
      variance & 2 & 1.73 & 1.62  \\
      \hline
      CV & 0.14 & .131 & .127\\
      \hline
    \end{tabular}
  \end{center}
  We note that the variances and coefficients of variation of the
  fluxes decrease down the chain, as guaranteed by Theorem
  \ref{thm:msc}.
\label{MAexample}
\end{example}

\begin{example}[Sum of Two Species with Michaelis-Menten Kinetics]
    Consider the following MSC chain with Michaelis-Menten Kinetics
  \begin{equation*}
    \begin{array}{cccccccc}
      I(t) & & F_1 & & F_2 & & F_3 & \\
      \longrightarrow & Y & \longrightarrow & X_1 + X_2 &
      \longrightarrow & X_3 + X_4 & \longrightarrow, 
    \end{array}
  \end{equation*}
  where $I(t)$ and $F_1$ are as in Example \ref{MAexample}, and
  $F_2(x_1,x_2) = 14x_1x_2/[(1 + x_1)(1+x_2)]$, $F_3(x_3,x_4) =
  14x_3x_4/[(1 + x_3)(1+x_4)]$.  Using Matlab to perform a Monte Carlo
  simulation we computed the means, variances, and coefficient of
  variation of the fluxes to be the following:
  \begin{center}
    \begin{tabular}{|c|c|c|c|}
      \hline
      & $F_1(y)$ & $F_2(x_1,x_2)$ & $F_3(x_3,x_4)$  \\ 
      \hline
      mean & 10 & 10 & 10  \\
      \hline
      variance & 2 & .72 & .49  \\
      \hline
      CV & .14 & .085 & .07 \\
      \hline
    \end{tabular}
  \end{center}
  As guaranteed by Theorem \ref{thm:msc} the variances and
  coefficients of variation of the fluxes decrease down the chain.
  \label{biMMexample}
\end{example}

\begin{example}[Species can not be in more than one complex]
  Consider the following MSC chain subjected to white noise
  perturbations for which the species $X_1$ appears in two complexes
  (and so this system is not covered by Theorem \ref{thm:msc})
  \begin{equation*}
    \begin{array}{ccccccc}
      I(t) & & F_1 & & F_2 &  & F_3\\
      \longrightarrow & X_1 + X_2 & \longrightarrow & X_3 &
      \longrightarrow & X_1 + X_4 & \longrightarrow ,
    \end{array}
  \end{equation*}
  where $I(t) = 10 + \theta_{\delta}(x_1,x_2)dB(t)$ (with $\delta =
  .001$), $F_1(x_1,x_2) = 2x_1x_2$, $F_2(x_3) = x_3$, and
  $F_3(x_1,x_4) = 5x_1x_4$.  We performed a Monte Carlo simulation
  using Matlab to compute:
  \begin{center}
    \begin{tabular}{|c|c|c|c|}
      \hline
      & $F_1(x_1,x_2)$ & $F_2(x_3)$ & $F_3(x_1,x_4)$ \\
      \hline
      mean & 10 & 10 & 10  \\
      \hline
      variance & 4.16 & .45 & 1.71\\
      \hline
      CV & .204 & .067 & .131 \\
      \hline
    \end{tabular}
\end{center}
Note that $\var (F_3(x)) > \var (F_2(x))$ and $CV(F_3(x)) > CV
(F_2(x))$.  Therefore, the assumption in Theorem \ref{thm:msc} that
each species is in precisely one complex is necessary.
\label{example2}
\end{example}

\section{Discussion}

We have proven under a variety of different contexts that if the input
to a non-reversible biochemical reaction chain is perturbed by a
random process, then the variances and coefficients of variation of
the fluxes will decrease as one moves down the chain.  The assumptions
made on the different choices of perturbations and on the properties
of the rate functions were varied, and explicitly spelled out.
Further, much care was taken to state precisely what is meant by
``fluctuations decrease down reaction chains.''  Due to this
(necessary) mathematical detail, however, it is easy for the
over-riding point of the paper to be lost: considering the
out-of-equilibrium dynamics of a biochemical system can be an
important tool for understanding the dynamical properties of that
system.

A comparison of the results of this paper to metabolic control
analysis (MCA) (\cite{KacserBurns73}\cite{HeinRap74}) sheds light on
the importance of considering out-of-equilibrium dynamics.  The
control coefficient for the flux out of a reaction chain, $F$, in
terms of the input, $I$, is
\begin{equation*}
C^F_I = \frac{\partial F}{\partial I} \cdot \frac{I}{F},
\end{equation*}
where the values are computed at equilibrium.  However, at
equilibrium, $F = I$.  Therefore, independent of the choice of
reaction kinetics or the length of the chain, $C^F_I = 1$.  This
implies that changes in the output of a chain correspond directly with
changes to the input.  However, by studying the out-of-equilibrium
dynamics, we have shown in this paper that the fluctuations in a
reaction chain will actually {\it decrease} as one moves down a
reaction chain and changes to the output of a chain do not correspond
directly with changes to the input.  The differing results are
biologically significant since it is tempting to speculate that this
decrease in fluctuations (and, hence, increase in stability) is one
reason long reaction chains may be evolutionarily advantageous in
cellular systems.

While all of the technical details of the proofs have been relegated
to the appendices, we would like to point out that to prove the main
results of this paper (with the exception of Theorem \ref{thm:avg})
two things must be shown: 1) the existence of a unique solution whose
statistics are stationary for the dynamics and 2) variances of a
stationary solution decrease down reaction chains.  Typically, a
stationary solution can be proved to exist so long as the
perturbations to the system do not drive any solutions to infinity.
Stability properties of the non-perturbed system can then be used to
show uniqueness of the stationary solution.  The fact that the
variances decrease down reaction chains follows from standard
inequalities and the use of Lyapunov type functions.  Intuitively,
however, the variances decrease down reaction chains because the
dynamics are always forcing the output flux from a complex towards the
input flux.  That is, the dynamics are constantly moving the system
towards a shifting equilibrium.  There is, however, a natural time
delay in its ability to do so.  Therefore the output will always be
lagging behind the input, which leads to the decrease in variance.

There is still much work to be done in studying biochemical reaction
systems subjected to external perturbations.  A natural extension of
this work and that of \cite{AndMattReed06} would be to attempt to
analyze reaction systems with more complicated geometries and more
complicated kinetics (like product inhibition).  The main technical
issues encountered in such a study would be: 1) the extremely weak
stability of many such systems (\cite{Feinberg87}\cite{FeinbergLec79})
would make proving the existence of a stationary solution difficult and
2) it will be difficult to isolate the variances of particular fluxes
or specie concentrations within a complicated system.  While both of
these problems are formidable in a theoretical study such as in this
paper, they become trivial in an {\em in silico} study
(\cite{nijhout06}).

\vspace{.25in}
\begin{center}
  {\bf \large Acknowledgments}
\end{center}

This paper is part of a larger research program between the authors,
Fred Nijhout, and Michael C. Reed, and we thank them for their input
and encouragement. In particular, we thank Michael C. Reed for being
the catalyst of this line of research. We also acknowledge useful
discussions with Martin Hairer. DFA was supported by the grants
NIH R01-CA105437, NSF DMS-0109872 and NSF DMS-0553687.  JCM was
partially supported by an NSF CAREER award (DMS0449910) and an Alfred
P.  Sloan foundation fellowship.

\appendix

\section{Precise definition of  the noise $\xi$ and the proofs}
\label{appA}

\subsection{Assumptions on the noise $\xi(t)$ needed for existence}

\label{assumptionXi}
In addition to the standing assumptions that $\xi(t)$ is stationary
with mean zero and finite variance, to prove the existence of a
stationary solution for \eqref{sscstation}, we need to assume that the
distribution of $\xi(t)$ is determined entirely by the past of $\xi$
on any interval of time $(-\infty,s]$ with $s \leq t$. Intuitively, we
mean that given the value of $\xi(s)$ for $s\in (-\infty,t]$ the
distribution on $[t,T]$ is uniquely determined for any $T>t$. This
must be done in a way such that if one first adds a segment $[t,T-r]$
and then $[T-r,T],$ the resulting distribution on $[t,T]$ is the same
as if one had added the segment $[t,T]$ in one step. For a discussion
of some of the issues involved if one does not make such an
assumption, see \cite{Hairer05} and subsequent works by the author.

Let $C_{ae}((-\infty,0],\R)$ denote the space of almost every where
continuous functions, $f$, endowed with the norm $\sup |f(s)|
e^{-\alpha|s|}$ for some $\alpha >0$.  Let $\mathcal{P}_t$ be a Markov
semigroup on $C_{ae}((-\infty,0],\R)$ which is Feller, has an
invariant measure $\mathcal{M}$, and such that for
$\mathcal{M}$-almost every $\gamma_0 \in C_{ae}((-\infty,0],\R)$,
$\mathcal{P}_t(\gamma_0, \;\cdot\;)$ is concentrated on elements
$\gamma_t \in C_{ae}((-\infty,0],\R)$ with $\gamma_t(s) =
\gamma_0(s+t)$ for $s\leq -t$. If $\gamma_t$, $t\geq 0$, is a
realization of the Markov chain generated with $\mathcal{P}_t$ with
$\gamma_0$ distributed as $\mathcal{M}$, we define
$\xi(s)=\gamma_t(s-t)$ for $s \leq t$. This is well defined since our
assumptions on $\mathcal{P}_t$ make $\gamma_s(r-s)=\gamma_t(r-t)$ for
$r \leq \min(s,t)$.

The dynamics of such a $\mathcal{P}_t$ can be understood as follows.
Given an initial history from $-\infty$ to $0$, one adds on a segment
of length $t$, resulting in a trajectory from $-\infty$ to $t$.  After
shifting this trajectory back by $-t$, one again obtains a trajectory
from $-\infty$ to $0$.  The distribution of this new trajectory from
$-\infty$ to $0$ is given by $\mathcal{P}_t$.  The conditions above
simply insure that the trajectory from $-\infty$ to $-t$ coincide with
the initial trajectory from $-\infty$ to zero.  If $\xi(t)$ is a
Markov process, then it can be constructed as above and hence is an
example of the type of noise we allow.

\subsection{Proof of principal result on variances}
\label{appA_principal}

\begin{proof}[Proof of Theorem \ref{nonlineardecrease}]
  We consider the dynamics given by equation \eqref{sscsdes}.  The
  proof when the dynamics is given by equation \eqref{sscstation} is
  identical.

  Defining $\xi_1(t) = F_1(x_1^*(t)) - I$, the equations governing
  $x_1^*$ and $x_2^*$ are
  \begin{align}
    d x_1^* &= (I - F_1(x_1^*))dt + \sigma \theta_{\delta}(x_1^*) dB(t)
    \label{equ1}\\
    \dot x_2^* &= F_1(x_1^*) - F_2(x_2^*) \ \dot = \ I - F_2(x_2^*) + \xi_1.
    \label{equ2}
  \end{align}
  We claim that for any $t$, $\E F_1(x_1^*(t)) = \E F_2(x_2^*(t)) =
  I$.  Integrating \eqref{equ1}, taking expected values, using that
  the distribution of $x^*(t)$ is stationary and noting that $\sigma
  \int_0^t \theta_{\delta}(x_1^*(s)) dB_s$ is an $L^2 - $martingale
  gives
  \begin{equation*}
    \E x_1^*(t) = \E x_1^*(0)+ \left(I - \E F_1(x_1^*(t))\right)t.
  \end{equation*}
  By the stationarity of the system, $\E x_1^*(t) = \E x_1^*(0)$, so
  $\E \left(F_1(x_1^*(t)) - I \right) = 0$ as claimed.  A similar
  argument which uses $\E F_1(x_1^*(t)) = I$ (and, hence, $\E \xi_1(t)
  = 0$) shows that $\E F_2(x_2^*(t)) = I$.  Therefore, in order to
  show that $\var(F_2(x_2^*(t))) < \var(F_1(x_1^*(t)))$, we need $\E
  \left( F_2(x_2^*(t)) - I \right)^2 < \E \xi_1(t)^2$.

  Let $G_1(x) = 2\int_0^x (F_2(y) - I)dy$.  Then,
  \begin{align}
    \begin{split}
      \frac{d}{dt}G_1(x_2^*(t)) &= G_1'(x_2^*(t)) \dot x_2^*(t)\\
      &= 2(F_2(x_2^*(t)) - I)(I - F_2(x_2^*(t)) + \xi_1(t))\\
      &= -2(F_2(x_2^*(t)) - I)^2 + 2(F_2(x_2^*(t)) - I)\xi_1(t).
    \end{split}
    \label{Gfunc}
  \end{align}
  Pick $\overline t > 0$ arbitrarily.  Integrating \eqref{Gfunc} up to time
  $\overline t$ and taking expected values gives
  \begin{align}
    \begin{split}
      \E G_1(x_2(\overline t)) - \E G_1(x_2(0)) = &-2\int_0^{\overline
        t}\E (F_2(x_2(s)) -
      I)^2 ds\\
      & + 2\int_0^{\overline t}\E \left[(F_2(x_2(s)) -
        I)\xi_1(s)\right] ds
    \end{split}
    \label{equality}
  \end{align}
  Using that $x_2^*(t)$ is stationary, differentiation of
  \eqref{equality} together with the inequality $2ab \le a^2 + b^2$
  gives
  \begin{align}
    \begin{split}
      0 &= -2 \E (F_2(x_2^*(\overline t)) - I)^2 + 2 \E
      \left[(F_2(x_2^*(\overline t)) - I)\xi_1(\overline t)\right]\\
      & \le -\E (F_2(x_2^*(\overline t)) - I)^2 + \E \xi_1(\overline
      t)^2.
      \label{inequality}
    \end{split}
  \end{align}
  We claim, however, that the inequality in \eqref{inequality} is
  strict.  To see why, we suppose, in order to find a contradiction,
  that $2\E \left[(F_2(x_2^*(\overline t)) - I)\xi_1(\overline
    t)\right] = \E (F_2(x_2^*(\overline t)) - I)^2 + \E
  \xi_1(\overline t)^2.$ Then $F_2(x_2^*(\overline t)) - I =
  \xi_1(\overline t)$ with probability one.  However, this implies
  $F_2(x_2^*(\overline t)) = F_1(x_1^*(\overline t))$ with probability
  one.  Because $\overline t$ was arbitrary, we conclude that with
  probability one $F_2(x_2^*(t)) = F_1(x_1^*(t))$ for all $t$ in some
  countably dense subset of $\R$.  However, by the continuity of the
  functions involved, this implies that with probability one
  $F_2(x_2^*(t)) = F_1(x_1^*(t))$ for all $t \in \R$.  Thus, $\dot
  x_2^*(t) = 0$ for all time and $x_2^*(t)$ is a constant.  But,
  $F_2(x_2^*(t)) \equiv F_1(x_1^*(t))$ and so $x_1^*(t)$ is also a
  constant.  However, for any $t > 0$, $P\{x_1^*(s) = const: s \in
  [0,t]\} = 0 \ne 1$.  Thus, the inequality in \eqref{inequality} is
  strict, which was the desired result of the Theorem.  Therefore, the
  result is shown for the first step in the chain.  To complete the
  proof, one simply repeats the argument down the chain.
\end{proof}

\subsection{Existence and uniqueness of stationary solutions and
  invariant measures}
\label{appA_measures}

The proofs of Theorems \ref{measurelemma}, \ref{cor:arb_lineargrowth},
and \ref{cor:arb_bdd} have the same overall structure. We use the
assumptions over the dynamics to obtain a uniform in time bound on
some statistic of the concentration vector which can be used to prove
that a sequence of time averages is tight. By extracting a convergent
sub-sequence we can prove the existence of at least one invariant
measure for the white in time setting. For the stationary forcing, we
must work on the space to trajectories stretching back to negative
infinity and prove the existence of a stationary measure on that
space.  We then prove that the invariant measure or stationary
solution is unique and that the statistics of any solution converge to
it under the dynamics of the system.

To prove the needed tightness for Theorem \ref{measurelemma}, we make
use of the following Lyapunov function:
\begin{equation}
  V(x) = \sum_{i=1}^n \frac{V_i}{2}\left[\sum_{j=1}^i \left (x_j -
      \bar x_j \right) \right]^2,
  \label{lyapunov}
\end{equation}
where the $V_i$'s are positive numbers yet to be determined and the
$\bar x_j$ are defined as the solution to $F_j(\bar x_j)=I$ (that
is, they are the equilibrium values of the unperturbed problem).  As
an example, for a chain with $n = 2$ we have
\begin{equation*}
  V(x) = \frac{V_1}{2}(x_1 - \bar x_1)^2 + \frac{V_2}{2}\left[\left(x_1
      - \bar x_1 \right) + (x_2 - \bar x_2) \right]^2.
\end{equation*}
We begin by proving a fact that, while technical, is the crux of the
proof of Theorem \ref{measurelemma}.

\begin{lemma}
  Let $\mathcal {A}$ be the generator of the SDEs \eqref{sscsdes}.
  Then there are positive numbers $V_1,V_2,\dots,V_n$ and positive
  numbers $c,k$ such that if $V(x)$ is defined by \eqref{lyapunov}
  then $\mathcal{A} V(x) \le c - k|x|$.
  \label{fact}
\end{lemma}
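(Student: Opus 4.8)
The plan is to compute the generator $\mathcal A$ applied to $V$ directly and exploit the telescoping structure built into \eqref{lyapunov}. Writing the drift of \eqref{sscsdes} as $b_1(x) = I - F_1(x_1)$ and $b_j(x) = F_{j-1}(x_{j-1}) - F_j(x_j)$ for $j \ge 2$, and abbreviating $S_i = \sum_{j=1}^i (x_j - \bar x_j)$ so that $V(x) = \sum_i \tfrac{V_i}{2}S_i^2$, one has
\begin{equation*}
  \mathcal A V(x) = \sum_{i=1}^n V_i S_i \sum_{k=1}^i b_k(x) + \frac{\sigma^2}{2}\theta_\delta(x_1)^2 \sum_{i=1}^n V_i,
\end{equation*}
because $\partial_{x_k}S_i = 1$ for $k \le i$ and $0$ otherwise, so that $\partial_{x_k}V = \sum_{i \ge k}V_i S_i$ and $\partial_{x_1}^2 V = \sum_i V_i$. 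The first key observation is that the inner drift sum telescopes: $\sum_{k=1}^i b_k(x) = I - F_i(x_i) = -(F_i(x_i) - F_i(\bar x_i))$, using $F_i(\bar x_i) = I$. Hence the diffusion term is bounded by the constant $\tfrac{\sigma^2}{2}\sum_i V_i$ (as $\theta_\delta \le 1$) and can be absorbed into $c$, while the drift part becomes $-\sum_i V_i S_i (F_i(x_i) - F_i(\bar x_i))$.

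Next I would split $S_i = (x_i - \bar x_i) + S_{i-1}$ (with $S_0 = 0$) to isolate the diagonal dissipation from the cross terms:
\begin{equation*}
  -\sum_{i=1}^n V_i S_i (F_i(x_i) - F_i(\bar x_i)) = -\sum_{i=1}^n D_i - \sum_{i=2}^n V_i S_{i-1}(F_i(x_i) - F_i(\bar x_i)),
\end{equation*}
where $D_i = V_i (x_i - \bar x_i)(F_i(x_i) - F_i(\bar x_i))$. Assumption \ref{ssc_assump}(b) (strict monotonicity of $F_i$) forces $D_i \ge 0$, and Assumption \ref{ssc_assump}(c) supplies the growth: since $\lim_{x\to\infty}F_i(x) > I$, there are constants $R_i,\gamma_i > 0$ with $F_i(x_i) - I \ge \gamma_i$ for $x_i \ge R_i$, whence $D_i \ge \tfrac{V_i\gamma_i}{2}(x_i - \bar x_i)$ once $x_i$ is large. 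Together with $D_i \ge 0$ on the bounded region, this yields a pointwise bound $D_i \ge \beta_i |x_i - \bar x_i| - c_i$ valid for all $x_i \ge 0$, with $\beta_i$ proportional to $V_i\gamma_i$. This is the source of the desired $-k|x|$.

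The main obstacle is controlling the cross terms $V_i S_{i-1}(F_i(x_i)-F_i(\bar x_i))$ and choosing the weights $V_i$ so that the diagonal dissipation dominates. When the $F_i$ are bounded, $|F_i(x_i) - F_i(\bar x_i)|$ is bounded by some $B_i$, so the total cross contribution is at most $\sum_j \big(\sum_{i>j}V_i B_i\big)|x_j - \bar x_j|$; I would then fix the $V_i$ by backward induction (choose $V_n$, then each $V_j$ large enough that $\beta_j = \tfrac{V_j\gamma_j}{2}$ exceeds $\sum_{i>j}V_iB_i$ by a fixed margin $k$), so that after absorption a net coefficient $-k$ survives on each $|x_j - \bar x_j|$. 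Since $\sum_j |x_j - \bar x_j| \ge |x| - \sum_j \bar x_j$, this delivers $\mathcal A V(x) \le c - k|x|$. In the unbounded case the diagonal dissipation $D_i$ grows superlinearly and overwhelms the cross terms through a Young's inequality, so that regime is in fact easier; the delicate case is the bounded one, where dissipation and cross terms are both merely linear and the hierarchical choice of the $V_i$ is essential.
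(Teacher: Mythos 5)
Your computation of $\mathcal{A}V$, the telescoping of the drift, the diagonal dissipation bound $D_i \ge \beta_i|x_i-\bar x_i| - c_i$, and the backward-induction choice of the weights $V_i$ all coincide with the paper's argument (the paper merely organizes the same double sum by the factor $(x_j-\bar x_j)$ instead of by $S_i$), and your treatment of the \emph{bounded} case is correct. The genuine gap is the unbounded case, which you wave off as ``easier'' via Young's inequality; this has the difficulty exactly backwards, and the Young step fails. The dangerous cross term is $-V_i S_{i-1}\bigl(F_i(x_i)-I\bigr)$ in the regime $S_{i-1}<0$, $x_i$ large. Young's inequality would bound it by $\epsilon V_i\bigl(F_i(x_i)-I\bigr)^2 + C_\epsilon V_i|S_{i-1}|^2$, but neither piece is controlled by the dissipation you have: $\bigl(F_i(x_i)-I\bigr)^2$ is dominated by the diagonal term $V_i(x_i-\bar x_i)\bigl(F_i(x_i)-I\bigr)$ only when $F_i(x_i)-I$ grows at most linearly in $x_i$ (the standing assumptions allow, and the paper's examples use, $F_1(x)=x^2$ and mass-action kinetics), while $|S_{i-1}|^2$ is quadratic and the dissipation in the coordinates $j<i$ is only the linear $\beta_j|x_j-\bar x_j|$. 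No choice of constants $V_i$ repairs this; as written your proof covers only kinetics with bounded (or at most linearly growing) $F_i$.

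What actually rescues the unbounded case is positivity, which you never invoke. Split each cross term by the sign of $S_{i-1}$. If $S_{i-1}>0$, the term is positive only when $F_i(x_i)<I$, and then $0\le I-F_i(x_i)\le I$ because $F_i\ge 0$ (Assumption \ref{ssc_assump}, parts a and b); the cross term is then at most $V_i I\, S_{i-1} \le V_i I \sum_{j<i}|x_j-\bar x_j|$, a linear bound with \emph{constant} coefficient, which your hierarchical choice of weights handles whether or not $F_i$ is bounded. (This one-sided bound $I-F_i\le I$ is what the paper uses in place of your two-sided $|F_i-I|\le B_i$.) If $S_{i-1}<0$, then because the dynamics live in $\R^n_{\ge 0}$ one has $S_{i-1}\ge -\sum_{j<i}\bar x_j =: -A$, so the coefficient multiplying the possibly unbounded factor $\bigl(F_i(x_i)-I\bigr)^+$ is a constant; combining with (half of) the diagonal term of the \emph{same} index, $-V_i(x_i-\bar x_i-A)\bigl(F_i(x_i)-I\bigr)^+ \le c - k x_i$, with no constraint on the weights at all. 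With this sign-splitting your backward induction closes uniformly and the bounded/unbounded dichotomy disappears; without it, the unbounded case — not the bounded one — is where the lemma's content lies.
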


\begin{proof}
  For all $k \le n$:
  \begin{equation}
    \frac{\partial V}{\partial x_k} = \frac{\partial}{\partial x_k}\sum_{i=1}^n
    \frac{V_i}{2}\left[\sum_{j=1}^i \left (x_j - \bar x_j \right)
    \right]^2 = \sum_{i=k}^n V_i\left[\sum_{j=1}^i \left (x_j - \bar x_j
      \right) \right].
    \label{dV}
  \end{equation}
  Let $F_0 = I$.  Using equation \eqref{dV}, it can be shown that
  \begin{equation*}
    \sum_{k=1}^n \frac{\partial V}{\partial x_k}(F_{k-1}-F_k) =
    \sum_{j=1}^n (x_j - \bar x_j) \left(\sum_{i=j}^n V_i (I-F_i)\right).
  \end{equation*}
  Therefore,
  \begin{align*}
    \mathcal{A}V(x) &= \frac{1}{2}\sigma^2 \theta_{\delta}(x_1)^2
    \frac{\partial^2}{\partial x_1^2}V(x) +
    \sum_{k=1}^n \frac{\partial V}{\partial x_k}(F_{k-1}-F_k)\\
    &= \frac{1}{2}\left(\sigma^2 \theta_{\delta}(x_1)^2 \sum_{i=1}^n V_i
    \right) + \sum_{j=1}^n (x_j - \bar x_j) \left(\sum_{i=j}^n V_i
      (I-F_i)\right)\\
    &\ \dot = \ \frac{1}{2}\left(\sigma^2 \theta_{\delta}(x_1)^2 \sum_{i=1}^n V_i
    \right) + \sum_{j=1}^n s_j(x),
  \end{align*}
  where the last equality is a definition.  We now choose the
  $V_j$'s recursively.  Let $V_n = 1$.  Because
  $\displaystyle{\lim_{x \to \infty} F_n(x) > I}$, $s_n$ is bounded
  by:
  \begin{equation*}
    s_n(x) = (x_n -\bar x_n)(I - F_n(x_n)) < c_n - k_n x_n,
  \end{equation*}
  where $c_n$ and $k_n$ are some positive constants.  Then $s_{n-1}$
  is given by
  \begin{align*}
    s_{n-1}(x) &= (x_{n-1} - \bar x_{n-1}) \left( V_{n-1}(I -
      F_{n-1}(x_{n-1})) + (I - F_n(x_n))\right).
  \end{align*}
  $F_n(x_n) \ge 0$, so if $x_{n-1} \ge \bar x_{n-1}$, then
  \begin{align*}
    s_{n-1}(x) &\le (x_{n-1} - \bar x_{n-1}) \left( V_{n-1}(I -
      F_{n-1}(x_{n-1})) + I\right).
  \end{align*}
  We may therefore choose $V_{n-1}$ to be large enough so that there
  are positive constants $c_{n-1}$ and $k_{n-1}$ such that
  \begin{equation*}
    s_{n-1}(x) < c_{n-1} - k_{n-1}x_{n-1}.
  \end{equation*}
  Continuing up the chain, we consider $s_j$ for $j < n$.  When $x_j >
  \bar x_j$ we have
  \begin{align*}
    s_{j}(x) &= (x_j - \bar x_j) \left( V_j(I - F_j(x_j)) + \sum_{i=j+1}^n
      V_i (I-F_i)\right)\\
    & \le (x_j - \bar x_j) \left( V_j(I - F_j(x_j)) + I \sum_{i=j+1}^n
      V_i \right).
  \end{align*}
  Since $V_{j+1} \dots, V_n$ have already been defined, we may choose
  $V_j$ so large that there are positive constants $c_j$ and $k_j$
  such that
  \begin{equation*}
    s_{j} < c_{j} - k_{j}x_{j}.
  \end{equation*}
  Setting
  \begin{equation*}
    c = \frac{1}{2}\sigma^2 \sum_{i=1}^n V_i  + \sum_{i=1}^n c_i,
  \end{equation*}
  we now have that for some $k$
  \begin{equation*}
    \mathcal{A}V(x) \le c - \sum_{i=1}^n k_i x_i \le c - k|x|,
  \end{equation*}
  which was the desired result.
\end{proof}

\begin{proof}[Proof of Theorem \ref{measurelemma}]
  The proof has two parts.  First, we will use Lemma \ref{fact} and
  Prohorov's theorem (\cite{Bill91Conv}, pg. 59) to show that there
  exists a measure which is invariant to the stochastic flow generated
  by equation \eqref{sscsdes}.  We will then prove that this invariant
  measure is unique and that all distributions converge to it under
  the flow of the SDE \eqref{sscsdes}.

  \lskip

  \noindent {\it Part I}

  Let $V(x)$ be defined by \eqref{lyapunov} where
  $V_1,V_2,\dots,V_n$ are given by Lemma \ref{fact}.  Then, if $k,c >
  0$ are the constants given in the conclusion of Lemma \ref{fact},
  \begin{equation*}
    dV(x) = \mathcal{A}V(x)dt + dM(t) \le \left(c - k|x|\right)dt + dM(t),
  \end{equation*}
  where $M(t)$ is some $L^2$ - martingale.  Integrating gives
  \begin{align*}
    V(x(t)) \le V(x(0)) + ct - k\int_0^t |x(s)|ds + M(t) - M(0),
  \end{align*}
  where $x(0)$ is some fixed value.  Rearranging terms, taking
  expected values and using the fact that $\E M(t) = \E M(0)$ then
  yields,
  \begin{equation*}
    \frac{1}{t}\int_0^t \E |x(s)|ds \le \frac{c}{k} + \frac{V(x(0))}{kt}.
  \end{equation*}
  Thus for any $R>0$ Chebychev's inequality gives
  \begin{equation}
    \frac{1}{t}\int_0^t P\{|x(s)|>R\} ds \le \frac{c}{k}
    \frac{1}{R} + \frac{V(x(0))}{kt}\frac{1}{R},
    \label{kb1}
  \end{equation}
  where, again, the initial condition $x(0)$ is fixed.  The right side
  of equation \eqref{kb1} converges to zero uniformly in $t \ge 1$ as
  $R \to \infty$.  Therefore, the sequence of measures on $\mathbb R$
  defined by
  \begin{equation*}
    \nu_n(A) \ \dot= \ \frac{1}{t_n}\int_0^{t_n} P\{x(s) \in A\} ds,
  \end{equation*}
  where $t_n \to \infty$ as $n\to \infty$ and $A \subset \mathbb R^n$,
  is tight (\cite{Bill91Conv}, pg. 59).  By Prohorov's theorem,
  $\nu_n$ is relatively compact and so there exists a subsequence
  $\nu_{n_k}$ and a measure $\mu$, such that $\nu_{n_k} \to \mu$,
  where the convergence is weak convergence.  Thus, for all $A \subset
  \mathbb R$
  \begin{equation*}
    \mu(A) = \lim_{k \to \infty} \frac{1}{t_{n_k}}\int_0^{t_{n_k}}
    P\{x(s) \in A \} ds.
  \end{equation*}
  For $A \subset \mathbb R^n$, let $\phi_T(A) = \{x(T): x(0) \in A\}$.
  To show that $\mu$ is invariant to the flow of equation
  \eqref{sscsdes} we need to demonstrate that for all $T>0$ and $A
  \subset \mathbb R$, $\mu(\phi^{-1}_T)(A) = \mu(A)$, where
  $\mu(\phi^{-1}_T)(A) \; \dot = \; \mu(x : \phi_T(x) \in A)$.  Note
  that, by definition, $\mu(x : \phi_T(x) \in A) = \lim_{k \to
    \infty} \frac{1}{t_{n_k}}\int_0^{t_{n_k}} P\{x(s+T) \in A \} ds$.
  Using a change of variable, we then make the following computation for
  any $T>0$ and $A \subset \mathbb R$:
  \begin{align*}
    \mu(\phi^{-1}_T)(A) &= \lim_{k \to \infty}
    \frac{1}{t_{n_k}}\int_0^{t_{n_k}} P\{x(s+T) \in A \} ds\\
    &= \lim_{k \to \infty} \frac{1}{t_{n_k}}\int_0^{t_{n_k}} P\{x(s)
    \in A \} ds + \lim_{k \to \infty}
    \frac{1}{t_{n_k}}\int_{t_{n_k}}^{t_{n_k}+T} P\{x(s) \in A \} ds\\
    & \ \ \ \ \ - \lim_{k \to \infty} \frac{1}{t_{n_k}}\int_{0}^{T}
    P\{x(s) \in A \} ds\\
    &= \mu(A) + \lim_{k \to \infty} \frac{1}{t_{n_k}} \left[
      \int_{t_{n_k}}^{t_{n_k}+T} P\{x(s) \in A \} ds - \int_{0}^{T}
      P\{x(s) \in A \} ds \right].
  \end{align*}
  However,
  \begin{align*}
    \left|\lim_{k \to \infty} \frac{1}{t_{n_k}} \left[
        \int_{t_{n_k}}^{t_{n_k}+T} P\{x(s) \in A \} ds - \int_{0}^{T}
        P\{x(s) \in A \} ds \right] \right| & \le \lim_{k \to
      \infty}\frac{2T}{t_{n_k}} = 0,
  \end{align*}
  and so $\mu(\phi^{-1}_T)(A) = \mu(A)$.  Thus, $\mu$ is invariant
  under the stochastic flow generated by \eqref{sscsdes}.

  \lskip

  \noindent {\it Part II}

  The proof that the invariant measure is unique is not completely
  straightforward.  The noise enters only one species, hence the
  diffusion is not uniformly elliptic (so arguments such as in
  \cite{MackeyLasota94} do not suffice).  The proof we now sketch
  follows a rather standard line of argument. We refer the reader to
  \cite{mattStuartH} \cite{BelletThomas2002EC} \cite{MeynTweedie93MCS}
  for the missing details.  The proof has three elements. First, one
  shows that the generator of the diffusion satisfies H\"ormander's
  ``sum of squares'' theorem and hence is hypoelliptic. This ensures
  that the Markov transition density $p_t(x,y)$ is smooth in $x$ and
  $y$ and hence is a Strong Feller process.  This gives the local
  smoothing needed to ensure that the invariant measure found above is
  unique. The structure of \eqref{sscsdes} and the fact that the
  $F_i'$ do not vanish ensures that the span of the needed Lie
  brackets is of full dimension. Hence H\"ormander's theorem holds.
  
  Secondly, we need to provide the global information which ensures
  open set irreducibility (the fact that processes starting from
  different initial points have nonzero probability of entering a
  small neighborhood of each other). The Lyapunov function given by
  \eqref{lyapunov} shows that the processes return to a bounded ball
  $\mathcal{B}$ about the origin eventually.  Since there is a
  globally attracting fix point, if the noise is small for long enough
  all of the points of $\mathcal{B}$ will enter an arbitrarily small
  neighborhood of the fixed point.
  
  Finally, with the above facts in hand, the uniqueness and
  convergence result follows from standard arguments (see
  \cite{b:MeynTweedie93c} \cite{BelletThomas2002EC} \cite{mattStuartH}
  \cite{b:EMattingly00}).

\end{proof}

\begin{proof}[Proof of Theorem \ref{cor:arb_lineargrowth}]

  As in the proof of Theorem \ref{measurelemma}, the proof is split
  into two parts.  In the first we prove the existence of a stationary
  solution $x^*(t)$ for the dynamics \eqref{sscstation}.  In the
  second we show that the if $x(t)$ and $y(t)$ are solutions driven by
  the same noise, then $y(t) \to x(t)$ pathwise.  Hence, we conclude
  there can only be one stationary solution since any two would
  converge to each other over time.

  \noindent {\it Part I}\\
  Unlike the previous example, the process $x(t)$ alone is not a
  Markov process.  However, if we include the entire history of $\xi$
  then the system does become Markovian. More precisely, from the
  assumptions in section \ref{assumptionXi} we know that
  $\xi(s)=\gamma_t(s-t)$ for $s \leq t$ where $\gamma_t$ is a Feller
  Markov process on $C_{ae}((-\infty,0],\R)$ with semi-group
  $\mathcal{P}_t$ and with $\gamma_0$ distributed as the invariant
  measure $\mathcal{M}$.  Then the pair $(x(t),\gamma_t)$ is a Markov
  process on the expanded state space $\R^n\times
  C_{ae}((-\infty,0],\R)$. Let $ \mathcal{\hat P}_t$ denote the Markov
  transition semi-group of this system and $\pi_x$ and $\pi_\gamma$
  the projection onto the $x$ and $\gamma$ coordinate respectively.
  Since we start $\gamma_t$ from an invariant measure for its
  dynamics, we know that the statistics of $\gamma_t$ are constant in
  time equal to $\mathcal{M}$ for all $t$ and hence is tight.

  Let $x^{(0)}$ be an arbitrary initial condition for $x(t)$ Defining
  the measure
  \begin{align*}
    Q_t(\;\cdot\;)= \frac1t \int_0^t \int \mathcal{\hat P}_s(x^{(0)},
    \gamma,\;\cdot\;) \mathcal{M}(d\gamma) ds
  \end{align*}
  we need only show that $Q_t \pi_x^{-1}$ is tight to conclude that
  $Q_t$ is tight since $Q_t \pi_\gamma^{-1}=\mathcal{M}$ is
  independent for $t$. We will do this coordinate by coordinate.
  Consider the equation governing $x_1(t)$:
\begin{equation}
    \dot x_1(t) = I - F_1(x_1(t)) + \xi(t).
    \label{eq:x_1}
\end{equation}
Integrating \eqref{eq:x_1} gives
  \begin{align}
    x_1(t) &= x(0) + It - \int_0^t F_1(x_1(s))ds + \int_0^t \xi(s)
    ds  \label{intbound} \\
    &\le x(0) + It - \int_0^t F_1(x_1(s)) 1_{\{|x(s)| > R\}}ds +
    \int_0^t \xi(s) ds \notag\\
    &\le x(0) + It - F_1(R) \int_0^t 1_{\{|x_1(s)| > R\}}ds + \int_0^t
    \xi(s) ds. \notag
  \end{align}
  Taking expected values and rearranging terms gives
  \begin{equation}
 \frac{1}{t}\int_0^t P\{|x_1(s)| > R\} ds \le \frac{I}{F_1(R)} +
    \frac{\E x_1(0)}{F_1(R)}.
    \label{firstbd}
  \end{equation}
  Note that rearranging equation \eqref{intbound} and taking expected
  values gives us the additional bound
  \begin{equation}
  \frac{1}{t}\int_0^t \E F_1(x_1(s))ds \le I + \frac{\E x_1(0)}{t}.
  \label{intbound2}
  \end{equation}

  Continuing down the chain we consider $x_2(t)$:
  \begin{equation*}
  \dot x_2(t) = F_1(x_1(t)) - F_2(x_2(t)).
  \end{equation*}
  Integrating gives
  \begin{align}
  x_2(t) &= x_2(0) + \int_0^t F_1(x_1(s)) ds - \int_0^t
  F_2(x_2(s))ds \label{another}\\
  &\le x_2(0) + \int_0^t F_1(x_1(s)) ds - F_2(R)\int_0^t
  1_{\{|x_2(s)| > R\}}ds \notag
  \end{align}
  Rearranging terms as before, taking expected values and using
  equation \eqref{intbound2} gives
  \begin{equation*}
  \frac{1}{t}\int_0^t P\{|x_2(s)| > R\} ds \le \frac{I}{F_2(R)} +
    \frac{\E x_1(0)}{tF_2(R)} + \frac{\E x_2(0)}{tF_2(R)}
  \end{equation*}
  Further, rearranging equation \eqref{another} and using equation
  \eqref{intbound2} gives
  \begin{equation*}
    \frac{1}{t}\int_0^t \E F_2(x_2(s))ds \le I + \frac{\E x_1(0)}{t} +
    \frac{\E x_2(0)}{t}. 
  \end{equation*}
  We may continue down the chain in a similar manner and conclude
  that there are positive constants $c_1, c_2,\dots,c_n$ such that for all $t
  \ge 1$
  \begin{align}
    \label{totalbound}    
    Q_t\pi_x^{-1}(\{ y: \sup_i |y_i| > R \})&= \frac{1}{t}\int_0^t
    P\{\sup_i |x_i(s)| > R \}ds \\ \notag &< \sum_i
    \frac{1}{t}\int_0^t P\{|x_i(s)| > R \}ds
    < \sum_i \frac{c_i}{F_i(R)}.
      \end{align}
      Because each $F_i$ is monotone and unbounded the right side of
      inequality \eqref{totalbound} above converges to zero uniformly
      in $t$ as $R \to \infty$.  Therefore, just as in the proof of
      Theorem \ref{measurelemma}, we may invoke Prohorov's Theorem to
      guarantee the existence of a measure $\mu$ on $\R^n \times
      C_{ae}((-\infty,0], \R)$ that is invariant to the dynamics
      induced by $\mathcal{\hat P}_t$.  By Kolmogormov's extension
      theorem we can use this measure to define a measure on pairs of
      noise $\xi$ and solution trajectories $x$ starting at $-\infty$
      and continuing to $\infty$. The projection of this measure onto
      the solution coordinate produces a stationary solution for the
      $x(t)$ dynamics. One should really view this stationary solution
      $x^*$ along with its noise trajectory $\xi$ which was
      constructed along with it.

      \noindent {\it Part II}

  Let $x^*(t) = x^*(t,\xi)$ be the stationary solution and matching
  noise trajectory found above. Let $y(t)$ be the solution starting
  from an arbitrary initial condition $y(0)$ using the same noise
  $\xi(t)$.

  Consider $x_1^*(t)$ and $y_1(t)$.  If $x_1^*(0) = y_1(0)$, then $x_1^*(t)
  = y_1(t)$ for all time by uniqueness of solutions.  Suppose that
  $x_1^*(0) > y_1(0)$ (if $x_1^*(0) < y_1(0)$ there is a symmetric
  argument).  Then, $x_1^*(t) > y_1(t)$ for all time.  Differentiating
  $x_1^*(t) - y_1(t)$ gives
  \begin{align*}
    \ddt (x_1^*(t) - y_1(t)) &= -(F_1(x_1^*(t)) - F_1(y_1(t)))\\
    &= -(x_1^*(t) - y_1(t))\frac{F_1(x_1^*(t)) - F_1(y_1(t))}{x_1^*(t) -
    y_1(t)}.
  \end{align*}
  Defining
  \begin{equation}
  H_1(t) = \frac{F_1(x_1^*(t)) - F_1(y_1(t))}{x_1^*(t) - y_1(t)},
  \label{eq:H}
  \end{equation}
  we have that
  \begin{align*}
  \begin{split}
    x_1^*(t) - y_1(t) &= (x_1^*(0) - y_1(0))e^{-\int_0^t H_1(s)ds}.
  \end{split}
  \end{align*}
  By the uniform bound given in equation \eqref{totalbound}, we know
  that both $x_1^*(t)$ and $y_1(t)$ spend a positive fraction of time in
  a compact set on which $H_1(t) > d_1 > 0$ for some $d_1 > 0$ (since
  $H_ 1$ is an approximation to the derivative of $F_1$).  Thus,
  $x_1^*(t) - y_1(t) \to 0$, as $t \to \infty$.

  We next consider $x_2^*(t)$ and $y_2(t)$.  Suppose that $x_2^*(0) <
  y_2(0)$.  Let $\tau_2$ be the first time $x_2^*(t) = y_2(t)$.  Then,
  up until time $\tau_2$,
  \begin{align*}
  \ddt (y_2(t) - x_2^*(t)) = -(F_1(x_1^*) - F_1(y_1)) - (F_2(y_2(t)) -
  F_2(x_2^*(t))).
  \end{align*}
  But, $x_1^* \ge y_1$ so $(F_1(x_1^*) - F_1(y_1)) \ge 0$ and
  \begin{align*}
    \ddt (y_2(t) - x_2^*(t)) &\le - (F_2(y_2(t)) - F_2(x_2^*(t)))\\
    &= -(y_2(t)- x_2^*(t))\frac{F_2(y_2(t)) - F_2(x_2^*(t))}{y_2(t) -
      x_2^*(t)}.
  \end{align*}
  Defining $H_2(t)$ as we did $H_1(t)$ we may conclude from the above
  that up until time $\tau_2$
  \begin{align*}
  y_2(t) - x_2^*(t) &< (y_2(0) - x_2^*(0))e^{-\int_0^t H_2(r) dr}.
  \end{align*}
  Therefore, if $\tau_2 = \infty$, then, as in the previous case, we
  may use the above equation and the bound \eqref{totalbound} to
  conclude that $|x_2^*(t) - y_2(t)| \to 0$, which is the desired
  result.

  If $\tau_2$ is finite, then for all time after $\tau_2$, $x_2^*(t) \ge
  y_2(t)$.  To see this note that if $x_2^*(t) =
  y_2(t)$, then
  \begin{equation*}
  \ddt (x_2^*(t) - y_2(t)) = F_1(x_1^*) - F_1(y_1) \ge 0,
  \end{equation*}
  where the inequality follows since $x_1^* \ge y_1$.  Thus, we
  consider times past $\tau_2$ and redefine our initial condition to
  be the values $x(\tau_2)$ and $y(\tau_2)$.

  We note:
  \begin{align}
  \begin{split}
  \ddt (x_2^*(t) - y_2(t)) &= -\left( F_2(x_2^*) - F_2(y_2)\right) +
  F_1(x_1^*) - F_1(y_1)\\
  &= -(x_2^* - y_2)\frac{F_2(x_2^*) - F_2(y_2)}{x_2^* - y_2} + F_1(x_1^*) -
  F_1(y_1). \label{diff}
  \end{split}
  \end{align}
  To gain control over the term $F_1(x_1^*) - F_1(y_1)$ we use the
  equations governing $x_1^*$ and $y_1$:
  \begin{equation}
  x_1^*(t) - y_1(t) = x_1^*(0) - y_1(0) + \int_0^t F_1(x_1^*(s)) -
  F_1(y_1(s))ds. \label{eq}
  \end{equation}
  Rearranging and using that $x_1^* \ge y_1$, we have
  \begin{equation}
  \int_0^t F_1(x_1^*(s)) - F_1(y_1(s))ds \le x_1^*(0) - y_1(0).
  \label{etabound}
  \end{equation}
  Thus, if $\eta_1(t) = \int_0^t F_1(x_1^*(s)) - F_1(y_1(s))ds$, we have
  that $F_1(x_1^*(t)) - F_1(y_1(t)) = \eta_1'(t)$, and that for all $t$,
  $\eta_1(t) < x_1^*(0) - y_1(0)$.  Therefore,
  \begin{equation*} \ddt (x_2^*(t) - y_2(t)) = -(x_2^* - y_2)H_2(t) +
  \eta_1'(t),
  \end{equation*}
  and integrating by parts gives
  \begin{align*}
  x_2^*(t) - y_2(t) &= (x_2^*(0) - y_2(0)e^{-\int_0^tH_2(r)dr} + \int_0^t
  \eta_1'(s) e^{-\int_{s}^{t}H_2(r)dr}ds\\
  &=(x_2^*(0) - y_2(0)e^{-\int_0^tH_2(s)ds} + \eta_1(t) -
  \eta_1(0)e^{-\int_0^t H_2(r)dr}\\
  &\ \ \ \ \ - \int_0^t \eta_1(s)
  e^{-\int_{s}^{t}H_2(r)dr}H_2(s)ds.
  \end{align*}
  The last two terms are negative and, as before, the exponential
  terms go to zero as $t \to \infty$.  So, $\lim_{t \to \infty}
  |x_2^*(t) - y_2(t)| \le \eta_1(t) \le x_1^*(0) - y_1(0)$.  However, we
  can re-scale time (do the above analysis on the interval $[t/2,t]$
  instead of $[0,t]$) to conclude that $\lim_{t \to \infty} |x_2^*(t) -
  y_2(t)| \le \lim_{t \to \infty} \eta_1(t) \le \lim_{t \to \infty}
  \left( x_1^*(t/2) - y_1(t/2)\right) = 0$.

  Now we continue down the chain in a similar manner and consider
  $x_3$ and $y_3$.  Without loss of generality we may assume $x_2^*(t) >
  y_2(t)$ for all time.  If $x_3(t) < y_3(t)$ for all time, we do the
  same argument as above to conclude that $|x_3(t) - y_3(t)| \to 0$,
  as $t \to \infty$.  Thus, we assume $x_3(t) > y_3(t)$ for all time.
  The argument is the same as that above, except we now have to get
  control over $F_2(x_2^*(s)) - F_2(y_2(s))$. We have
  \begin{align*}
    x_2^*(t) - y_2(t) &= x_2^*(0) - y_2(0) + \int_0^t \left( F_1(x_1^*(s)) -
    F_1(y_1(s)) \right) ds\\
    &\ \ \ \ \ - \int_0^t \left( F_2(x_2^*(s)) -
    F_2(y_2(s)) \right) ds.
  \end{align*}
  Rearranging gives
  \begin{equation*}
    \int_0^t \left( F_2(x_2^*(s)) - F_2(y_2(s)) \right) ds = x_2^*(0) - y_2(0)
    + \eta_1(t).
  \end{equation*}
  We may define the above integral to be $\eta_2(t)$ and perform the
  same analysis as before.  In this way we continue down the chain
  and conclude that $\lim_{t \to \infty}|x(t) - y(t)| = 0$, which
  was the desired result.

\end{proof}

\begin{proof}[Proof of Theorem \ref{cor:arb_bdd}] Let $N_1^{\epsilon}
  = F_1^{-1}(I + M + \epsilon)$, where $\epsilon < K - M$.  If $x_1(t)
  > N_1^{\epsilon}$, then by the monotonicity of $F_1$ we have
  \begin{align*}
    \dot x_1(t) &= I - F(x_1(t)) + \xi(t)\\
    & \le I - F(N_1^{\epsilon}) + \xi(t)\\
    & = -M  - \epsilon + \xi(t)\\
    & < -\epsilon.
  \end{align*}
  Therefore, independent of initial conditions, $\lim_{t \to \infty}
  x_1(t) < N_1^{\epsilon}$.  However, $\epsilon$ was arbitrary, so
  $\lim_{t \to \infty} x_1(t) \le F_1^{-1}(I + M)$.  Continuing in
  this manner down the chain shows $\lim_{t \to \infty} x_i(t) \le
  F_i^{-1}(I + M)$, for each $i$. Thus, for large $t$, there exists
  $L>0$ such that $\E |x(t)| < L$.  By Chebychev's inequality we then
  have
  \begin{equation*}
    \frac{1}{t}\int_0^t P\{|x_1(s)| > R\} ds \le \frac{L}{R},
  \end{equation*}
  which converges to zero uniformly in $t$ as $R \to \infty$. As in
  the proof of Theorem \ref{cor:arb_lineargrowth}, we need to consider
  the Markov process on the extended state space $\R^n\times
  C_{ae}((-\infty,0],\R)$. As before, we obtain tightness by using the
  above estimates on the marginal of this measure in the $x(t)$
  variable since the $\xi(t)$ variable is stationary and hence already
  tight. We may again use Prohorov's Theorem to guarantee the
  existence of an invariant measure.  The proof of uniqueness is the
  same is in the proof of Theorem \ref{cor:arb_lineargrowth}.
\end{proof}

\subsection{Proof of Theorem \ref{thm:avg}}
\label{appA_avg}

\begin{proof}[Proof of Theorem \ref{thm:avg}]
  We begin by showing that $\lim_{t \to \infty} x_1(t)/t = 0$.
  Consider the dynamics governing $x_1$ where $\xi(t)$ satisfies
  \eqref{zeroavg}
\begin{equation}
  \dot x_1 = I-F_1(x_1) + \xi(t).\\
  \label{app:x1}
\end{equation}
Let $\bar x_1 = F^{-1}(I)$.  Then
\begin{equation*}
  \ddt (x_1(t) - \bar x_1) = I - F_1(x_1(t)) + \xi(t) = -\frac{I -
    F_1(x_1(t))}{\bar x_1 - x_1(t)}(x_1(t) - \bar x_1) + \xi(t). 
\end{equation*}
Setting $H(t) = \frac{I - F_1(x_1(t))}{\bar x_1 - x_1(t)} > 0$ (which
is well defined since $F_1$ is assumed differentiable and is positive
by the monotonicity of $F_1$) and using Duhamel's formula gives us
\begin{equation*}
(x_1(t) - \bar x_1) = (x_1(0) - \bar x_1)e^{-\int_0^t H(s)ds} +
\int_0^t e^{-\int_s^t H(r) dr} \xi(s) ds.
\end{equation*}
Integrating by parts gives
\begin{align*}
  (x_1(t) - \bar x_1) &= (x_1(0) - \bar x_1)e^{-\int_0^t H(s)ds} +
  \int_0^t \xi(s) ds\\
  & \ \ \ \ + \int_0^t e^{-\int_s^t H(r) dr} H(s) \left(\int_0^s
    \xi(r)dr\right) ds.
\end{align*}
By the positivity of $H(t)$ and property \eqref{zeroavg}, we then have
\begin{equation*}
  \lim_{t \to \infty} \frac{x_1(t)}{t} = \lim_{t \to \infty}
  \frac{1}{t} \int_0^t e^{-\int_s^t H(r) dr} H(s) \left(\int_0^s
    \xi(r)dr\right) ds.  
\end{equation*}
Let $\epsilon > 0$.  There exists an $S > 0$ such that $s > S$ implies
$\left|\frac{1}{s}\int_0^s \xi(r) dr \right| < \epsilon/2$.  There
exists a $T = T(s) >0$ such that $t > T$ implies $\sup_{s<S}
\left|\frac{1}{t}\int_0^s \xi(r) dr \right| < \epsilon/2$.  Therefore,
if $t > \max\{S,T\}$ we have that
\begin{align*}
  \left| \frac{1}{t} \int_0^t e^{-\int_s^t H(r) dr} \right. & \left.
    H(s) \left(\int_0^s
      \xi(r)dr\right) ds \right|\\
  &\le \int_0^t e^{-\int_s^t H(r) dr} H(s) \left|\frac{1}{t}\int_0^s
    \xi(r)dr\right|(1_{\{s \le S\}}
  + 1_{\{s > S\}}) ds\\
  & \le \epsilon \int_0^t e^{-\int_s^t H(r) dr} H(s) ds\\
  & \le \epsilon.
\end{align*}
Thus, $\lim_{t \to \infty} x_1(t)/t = 0$.

Integrating equation \eqref{app:x1}, dividing by $t$ and taking the
limit as $t \to \infty$ now gives us
\begin{equation*}
\lim_{t \to \infty} \frac{1}{t} \int_0^t F(x(s))ds = I,
\end{equation*}
which proves part 1 of Theorem \ref{thm:avg} for $x_1$.

Let $G(x) = 2\int_0^x \left(F_1(y) - I\right)dy$, which,
non-coincidentally, is the same function used in the proof of Theorem
\ref{nonlineardecrease}.  We have
\begin{equation*}
\ddt G(x_1(t)) = -2(F_1(x_1(t)) - I)^2 + 2(F_1(x_1(t)) - I)\xi(t). 
\end{equation*}
Integrating and using the inequality $ab \le (1/2) a^2 + (1/2) b^2$
gives
\begin{equation*}
  G(x_1(t)) \le G(x_1(0)) - \int_0^t (F_1(x_1(s)) - I)^2ds + \int_0^t
  \xi(s)^2ds.  
\end{equation*}
Therefore, part 2 of Theorem \ref{thm:avg} will be shown for $x_1$ if
$\liminf_{t \to \infty} G(x_1(t))/t \ge 0$.  We have
\begin{align*}
  \liminf_{t \to \infty} \frac{1}{t}G(x_1(t)) &= 2
  \liminf_{t \to \infty} \frac{1}{t} \int_0^{x_1(t)} \left(F_1(y) -
    I\right)\left(1_{\{y >
      \bar x_1\}} + 1_{\{y \le \bar x_1\}} \right)dy\\
  &\ge 2\liminf_{t \to \infty} \frac{1}{t} \int_0^{x_1(t)}
  \left(F_1(y) -
    I\right)1_{\{y \le \bar x_1\}}dy\\
  &\ge -2I \lim_{t \to \infty} \frac{x_1(t)}{t}\\
  &=0,
\end{align*}
so part 2 is shown for $x_1$. Note that parts 1 and 2 of Theorem
\ref{thm:avg} show that $F_1(x_1(t)) - I$ satisfy condition
\eqref{zeroavg}.  Therefore, to prove parts 1, 2, and 3 for all $x_i$,
one simply continues down the chain considering $F_i(x_i(t))$ as the
external perturbation of $x_{i+1}$.
\end{proof}

\subsection{Proofs of Section \ref{sec:MSCchains}}
\label{msc_proof}

The proofs of Theorems \ref{thm:msc}, \ref{msc_measurelemma},
\ref{cor:msc_arb_lineargrowth}, and \ref{cor:msc_arb_bdd} can be
handled simultaneously.

\begin{proof}[Proof of Theorems \ref{thm:msc}, \ref{msc_measurelemma},
\ref{cor:msc_arb_lineargrowth}, and \ref{cor:msc_arb_bdd}]
The key to each proof is the recognition that the species in each
complex satisfy constant multiples of the same (stochastic)
differential equations.  Mathematically this means these species can
by grouped and treated as a single substrate with a redefined
kinetics.  This reduces us to the case previously studied.  More
explicitly, there are constants $c_{ijk}$ and $d_{ijk}$ such that
$x_i^j(t) = d_{ijk}x_i^k(t) + c_{ijk}$ for all $t$.  Thus, the species
of each complex can be solved for from knowledge of just one species
from that complex. Further, a monotone increase in one translates to a
monotone increase in the others.  Choosing one species, $y_i$, from
each complex, we may redefine the $F_i$'s (and $\theta_{\delta}$ in
the white noise case) appropriately so that the vector function $y(t)$
satisfies either \eqref{sscsdes} or \eqref{sscstation} with the
$F_i$'s satisfying Assumption \ref{ssc_assump}.  Therefore, applying
the theorems of Section \ref{results} completes the proof.
\end{proof}

\section{Processes Used in Examples \ref{4-1example} and
  \ref{4-2example}}
\label{appB}

In Example \ref{4-1example} $\xi(t)$ is described as a modified
Ornstein-Uhlenbeck process such that $-10 \le \xi(t)$.  More
precisely, $\xi(t)$ is governed by the following dynamics:
\begin{equation*}
    d\xi(t)=
    \begin{cases}
       -\xi(t)dt + 4dB(t) & \text{ if }\xi(t)>-10 \\
       -\xi(t)dt &  \text{ if }\xi(t) \leq -10
    \end{cases}
\end{equation*}
This dynamics ensures that if $\xi(0)> -10$ then $\xi(t)\geq -10$ for
all $t$.

In Example \ref{4-2example}, $\xi(t)$ is built from the
Ornstein-Uhlenbeck equation $d\xi(t) = -\xi(t)dt + 3dB(t)$, with the
added condition that if $\xi(t) = -4$ or $\xi(t)=4$, then $d\xi(t) =
-\xi(t)dt$. More precisely, $\xi(t)$ is governed by the following
dynamics:
\begin{equation*}
  d\xi(t)=
  \begin{cases}
    -\xi(t)dt & \text{if } \xi(t)dt\geq 4\\
     -\xi(t)dt + 3dB(t) &\text{if } -4 < \xi(t) <4\\
     -\xi(t)dt &\text{if }  \xi(t)dt\leq -4
  \end{cases}
\end{equation*}

\bibliographystyle{amsplain}
\bibliography{AndMattRev}

\pagebreak

\noindent {\large \textbf{Figures with Captions}}

\vspace{.2in}

\noindent \textbf{Figure \ref{M-M}}

    \begin{center}
      \includegraphics[height=2.2in]{mm.eps}
    \end{center}

\vspace{.2in}

\noindent \textbf{Caption:}

The horizontal lines represent possible inputs to species $X_2$ from
species $X_1$ and the vertical lines represent the value of $x_2$ that
would give an equilibrium to the system for a given input.  We
therefore see that minor fluctuations in $F_1(x_1) = x_1$ can
correspond with large fluctuations in $x_2$.

\end{document}